\title[]{Actions of certain torsion-free elementary amenable groups on strongly self-absorbing \cstar-algebras}
\author{Gábor Szabó}
\address{Department of Mathematical Sciences, University of Copenhagen, \phantom{----------}\linebreak \text{}\hspace{3.0mm} Universitetsparken 5, DK-2100 Copenhagen {\O}, Denmark.}
\email{gabor.szabo@math.ku.dk}
\thanks{Supported by the Danish National Research Foundation through the Centre for Symmetry and Deformation (DNRF92), and the European Union's Horizon 2020 research and innovation programme under the Marie Sklodowska-Curie grant agreement 746272}
\subjclass[2010]{46L55, 46L40}
\numberwithin{equation}{section}
\begin{document}

\renewcommand\matrix[1]{\left(\begin{array}{*{10}{c}} #1 \end{array}\right)}  
\newcommand\set[1]{\left\{#1\right\}}  

\newcommand{\IA}[0]{\mathbb{A}} \newcommand{\IB}[0]{\mathbb{B}}
\newcommand{\IC}[0]{\mathbb{C}} \newcommand{\ID}[0]{\mathbb{D}}
\newcommand{\IE}[0]{\mathbb{E}} \newcommand{\IF}[0]{\mathbb{F}}
\newcommand{\IG}[0]{\mathbb{G}} \newcommand{\IH}[0]{\mathbb{H}}
\newcommand{\II}[0]{\mathbb{I}} \renewcommand{\IJ}[0]{\mathbb{J}}
\newcommand{\IK}[0]{\mathbb{K}} \newcommand{\IL}[0]{\mathbb{L}}
\newcommand{\IM}[0]{\mathbb{M}} \newcommand{\IN}[0]{\mathbb{N}}
\newcommand{\IO}[0]{\mathbb{O}} \newcommand{\IP}[0]{\mathbb{P}}
\newcommand{\IQ}[0]{\mathbb{Q}} \newcommand{\IR}[0]{\mathbb{R}}
\newcommand{\IS}[0]{\mathbb{S}} \newcommand{\IT}[0]{\mathbb{T}}
\newcommand{\IU}[0]{\mathbb{U}} \newcommand{\IV}[0]{\mathbb{V}}
\newcommand{\IW}[0]{\mathbb{W}} \newcommand{\IX}[0]{\mathbb{X}}
\newcommand{\IY}[0]{\mathbb{Y}} \newcommand{\IZ}[0]{\mathbb{Z}}

\newcommand{\CA}[0]{\mathcal{A}} \newcommand{\CB}[0]{\mathcal{B}}
\newcommand{\CC}[0]{\mathcal{C}} \newcommand{\CD}[0]{\mathcal{D}}
\newcommand{\CE}[0]{\mathcal{E}} \newcommand{\CF}[0]{\mathcal{F}}
\newcommand{\CG}[0]{\mathcal{G}} \newcommand{\CH}[0]{\mathcal{H}}
\newcommand{\CI}[0]{\mathcal{I}} \newcommand{\CJ}[0]{\mathcal{J}}
\newcommand{\CK}[0]{\mathcal{K}} \newcommand{\CL}[0]{\mathcal{L}}
\newcommand{\CM}[0]{\mathcal{M}} \newcommand{\CN}[0]{\mathcal{N}}
\newcommand{\CO}[0]{\mathcal{O}} \newcommand{\CP}[0]{\mathcal{P}}
\newcommand{\CQ}[0]{\mathcal{Q}} \newcommand{\CR}[0]{\mathcal{R}}
\newcommand{\CS}[0]{\mathcal{S}} \newcommand{\CT}[0]{\mathcal{T}}
\newcommand{\CU}[0]{\mathcal{U}} \newcommand{\CV}[0]{\mathcal{V}}
\newcommand{\CW}[0]{\mathcal{W}} \newcommand{\CX}[0]{\mathcal{X}}
\newcommand{\CY}[0]{\mathcal{Y}} \newcommand{\CZ}[0]{\mathcal{Z}}

\newcommand{\FA}[0]{\mathfrak{A}} \newcommand{\FB}[0]{\mathfrak{B}}
\newcommand{\FC}[0]{\mathfrak{C}} \newcommand{\FD}[0]{\mathfrak{D}}
\newcommand{\FE}[0]{\mathfrak{E}} \newcommand{\FF}[0]{\mathfrak{F}}
\newcommand{\FG}[0]{\mathfrak{G}} \newcommand{\FH}[0]{\mathfrak{H}}
\newcommand{\FI}[0]{\mathfrak{I}} \newcommand{\FJ}[0]{\mathfrak{J}}
\newcommand{\FK}[0]{\mathfrak{K}} \newcommand{\FL}[0]{\mathfrak{L}}
\newcommand{\FM}[0]{\mathfrak{M}} \newcommand{\FN}[0]{\mathfrak{N}}
\newcommand{\FO}[0]{\mathfrak{O}} \newcommand{\FP}[0]{\mathfrak{P}}
\newcommand{\FQ}[0]{\mathfrak{Q}} \newcommand{\FR}[0]{\mathfrak{R}}
\newcommand{\FS}[0]{\mathfrak{S}} \newcommand{\FT}[0]{\mathfrak{T}}
\newcommand{\FU}[0]{\mathfrak{U}} \newcommand{\FV}[0]{\mathfrak{V}}
\newcommand{\FW}[0]{\mathfrak{W}} \newcommand{\FX}[0]{\mathfrak{X}}
\newcommand{\FY}[0]{\mathfrak{Y}} \newcommand{\FZ}[0]{\mathfrak{Z}}

\renewcommand{\phi}[0]{\varphi}
\newcommand{\eps}[0]{\varepsilon}

\newcommand{\id}[0]{\operatorname{id}}		
\newcommand{\eins}[0]{\mathbf{1}}			
\newcommand{\ad}[0]{\operatorname{Ad}}
\newcommand{\fin}[0]{{\subset\!\!\!\subset}}
\newcommand{\Aut}[0]{\operatorname{Aut}}
\newcommand{\dst}[0]{\displaystyle}
\newcommand{\cstar}[0]{\ensuremath{\mathrm{C}^*}}
\newcommand{\dist}[0]{\operatorname{dist}}
\newcommand{\cc}[0]{\simeq_{\mathrm{cc}}}
\newcommand{\scc}[0]{\simeq_{\mathrm{scc}}}
\newcommand{\vscc}[0]{\simeq_{\mathrm{vscc}}}
\newcommand{\ue}[0]{{~\approx_{\mathrm{u}}}~}
\newcommand{\GL}[0]{\operatorname{GL}}
\newcommand{\Hom}[0]{\operatorname{Hom}}
\newcommand{\dimrokc}[0]{\dim_{\mathrm{Rok}}^{\mathrm{c}}}


\newtheorem{satz}{Satz}[section]		

\newaliascnt{corCT}{satz}
\newtheorem{cor}[corCT]{Corollary}
\aliascntresetthe{corCT}
\providecommand*{\corCTautorefname}{Corollary}
\newaliascnt{lemmaCT}{satz}
\newtheorem{lemma}[lemmaCT]{Lemma}
\aliascntresetthe{lemmaCT}
\providecommand*{\lemmaCTautorefname}{Lemma}
\newaliascnt{propCT}{satz}
\newtheorem{prop}[propCT]{Proposition}
\aliascntresetthe{propCT}
\providecommand*{\propCTautorefname}{Proposition}
\newaliascnt{theoremCT}{satz}
\newtheorem{theorem}[theoremCT]{Theorem}
\aliascntresetthe{theoremCT}
\providecommand*{\theoremCTautorefname}{Theorem}
\newtheorem*{theoreme}{Theorem}

\theoremstyle{definition}

\newtheorem*{conjecturee}{Conjecture}

\newaliascnt{conjectureCT}{satz}
\newtheorem{conjecture}[conjectureCT]{Conjecture}
\aliascntresetthe{conjectureCT}
\providecommand*{\conjectureCTautorefname}{Conjecture}
\newaliascnt{defiCT}{satz}
\newtheorem{defi}[defiCT]{Definition}
\aliascntresetthe{defiCT}
\providecommand*{\defiCTautorefname}{Definition}
\newtheorem*{defie}{Definition}
\newaliascnt{notaCT}{satz}
\newtheorem{nota}[notaCT]{Notation}
\aliascntresetthe{notaCT}
\providecommand*{\notaCTautorefname}{Notation}
\newtheorem*{notae}{Notation}
\newaliascnt{remCT}{satz}
\newtheorem{rem}[remCT]{Remark}
\aliascntresetthe{remCT}
\providecommand*{\remCTautorefname}{Remark}
\newtheorem*{reme}{Remark}
\newaliascnt{exampleCT}{satz}
\newtheorem{example}[exampleCT]{Example}
\aliascntresetthe{exampleCT}
\providecommand*{\exampleCTautorefname}{Example}
\newaliascnt{questionCT}{satz}
\newtheorem{question}[questionCT]{Question}
\aliascntresetthe{questionCT}
\providecommand*{\questionCTautorefname}{Question}
\newtheorem*{questione}{Question}


\newcounter{theoremintro}
\renewcommand*{\thetheoremintro}{\Alph{theoremintro}}

\newaliascnt{theoremiCT}{theoremintro}
\newtheorem{theoremi}[theoremiCT]{Theorem}
\aliascntresetthe{theoremiCT}
\providecommand*{\theoremiCTautorefname}{Theorem}

\newaliascnt{conjectureiCT}{theoremintro}
\newtheorem{conjecturei}[conjectureiCT]{Conjecture}
\aliascntresetthe{conjectureiCT}
\providecommand*{\conjectureiCTautorefname}{Conjecture}

\newaliascnt{defiiCT}{theoremintro}
\newtheorem{defii}[defiiCT]{Definition}
\aliascntresetthe{defiiCT}
\providecommand*{\defiiCTautorefname}{Definition}


\begin{abstract} 
In this paper we consider a bootstrap class $\FC$ of countable discrete groups, which is closed under countable unions and extensions by the integers, and we study actions of such groups on \cstar-algebras.
This class includes all torsion-free abelian groups, poly-$\IZ$-groups, as well as other examples.
Using the interplay between relative Rokhlin dimension and semi-strongly self-absorbing actions established in prior work, we obtain the following two main results for any group $\Gamma\in\FC$ and any strongly self-absorbing \cstar-algebra $\CD$:
\begin{enumerate}[label=\textup{(\arabic*)},leftmargin=*]
\item There is a unique strongly outer $\Gamma$-action on $\CD$ up to (very strong) cocycle conjugacy.
\item If $\alpha: \Gamma\curvearrowright A$ is a strongly outer action on a separable, unital, nuclear, simple, $\CD$-stable \cstar-algebra with at most one trace, then it absorbs every $\Gamma$-action on $\CD$ up to (very strong) cocycle conjugacy.
\end{enumerate}
In fact we establish more general relative versions of these two results for actions of amenable groups that have a predetermined quotient in the class $\FC$.
For the monotracial case, the proof comprises an application of Matui--Sato's equivariant property (SI) as a key method.
\end{abstract}

\maketitle

\tableofcontents


\section*{Introduction}

The present work is a continuation of the work initiated in \cite{Szabo18rd}.
As such, our aim is to study \cstar-dynamical systems and to classify them up to cocycle conjugacy.
We refer to the introduction of \cite{Szabo18rd} or \cite{Szabo18kp} and the references therein for a proper motivation and historical overview of the classification theory of (discrete) group actions on von Neumann algebras and \cstar-algebras.

In short, the present technology in the realm of \cstar-algebras has not yet arrived at the point where one can reasonably attempt to classify actions of general amenable groups on all simple \cstar-algebras covered by the Elliott program; see \cite{Winter17} for an overview of recent developments on the latter.
This situation is in contrast to the well-understood situation of amenable group actions on injective factors; see for example \cite{Ocneanu85, Masuda07, Masuda13}.\footnote{There are of course many other possible references in this context, but listing them is beyond the scope of the present work.}
In order to gain some understanding about how to go about handling actions of general amenable groups on \cstar-algebras in the first place, it is beneficial (as a first step) to restrict one's attention to one of the most rigid types of \cstar-algebras, namely the strongly self-absorbing ones \cite{TomsWinter07}.
This special setup comes with a priori more angles of attack than the general case, such as the approach propagated in \cite{Szabo18ssa, Szabo18ssa2, Szabo17ssa3} to exploit strong self-absorption at the dynamical level, which already beared fruits in the context of outer actions of amenable groups on Kirchberg algebras \cite{Szabo18kp}. 
The insight from \cite{Szabo18kp} given by behavior of equivariant $KK$-theory for group actions on strongly self-absorbing \cstar-algebras gives rise to the following conjecture from the introduction of \cite{Szabo17ssa3}\footnote{The statement given here is a slightly refined version, in that we postulate not only the uniqueness for sufficiently outer actions, but also that they should all be automatically semi-strongly self-absorbing.}, which may be interpreted as an Ocneanu-type rigidity phenomenon; cf.\ \cite{Ocneanu85}.

\begin{conjecturei} \label{conjecture-a}
Let $\CD$ be a strongly self-absorbing \cstar-algebra.
Then for every countable torsion-free amenable group $\Gamma$, there is a unique strongly outer $\Gamma$-action on $\CD$ up to cocycle conjugacy, and such an action is semi-strongly self-absorbing.
\end{conjecturei}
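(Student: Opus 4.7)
The plan is to reduce the conjecture to the single task of constructing, for each countable torsion-free amenable $\Gamma$, one strongly outer model action $\alpha_0:\Gamma\curvearrowright\CD$ that tensorially absorbs every other strongly outer $\Gamma$-action on $\CD$ up to very strong cocycle conjugacy. Any such $\alpha_0$ is automatically semi-strongly self-absorbing, since the absorption property forces $\alpha_0\scc\alpha_0\otimes\alpha_0$; a two-sided Elliott-type intertwining of cocycle morphisms then delivers the uniqueness clause. The entire semi-strongly-self-absorbing-actions framework of \cite{Szabo18ssa,Szabo18ssa2,Szabo17ssa3} has been tailored to exactly this kind of reduction, so the conceptual architecture of the proof is already in place.

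To actually produce $\alpha_0$ and verify the absorption, the present paper proceeds by induction on the internal structure of $\Gamma\in\FC$: finite relative Rokhlin dimension handles a single $\IZ$-extension, an inductive-limit argument in the category of $\Gamma$-\cstar-algebras handles countable unions, and Matui--Sato's equivariant property (SI) handles the monotracial obstruction. To push this strategy to all countable torsion-free amenable $\Gamma$, both technical pillars must be strengthened. The Rokhlin-side input should be replaced by a relative quasi-tiling theorem in the spirit of Ornstein--Weiss (and its recent refinements by Downarowicz--Huczek--Zhang and others), producing almost-equivariant approximate partitions of unity indexed by arbitrary Følner sets of an amenable $\Gamma$. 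The (SI)-side input is expected to extend without conceptual difficulty to any countable amenable group, because Matui--Sato's argument is already driven by averaging over Følner sets rather than by any special feature of $\IZ^n$.

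Granted these ingredients, the conjecture for the class of torsion-free elementary amenable groups would follow by a transfinite induction along Chou's decomposition: first handle extensions by an arbitrary countable torsion-free abelian group (where the tower machinery is most robust), then iterate via extensions and countable unions, reusing the semi-strong-self-absorption step at each level. The main obstacle I would anticipate is the extension step in this inductive scheme: unlike plain Rokhlin dimension, the \emph{commuting} tower structures needed to preserve semi-strong self-absorption across extensions do not formally behave well under iteration, and fitting quasi-tiling approximations cleanly into the equivariant McDuff-type absorption at the heart of \cite{Szabo17ssa3} is delicate and will require careful cocycle bookkeeping at each layer. For torsion-free amenable groups that are not elementary amenable, no such inductive structure is available, and one would need a direct equivariant absorption argument driven purely by amenability and strong outerness; this lies beyond current techniques and constitutes, in my view, the core difficulty of the full conjecture.
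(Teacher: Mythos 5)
The statement you set out to prove is labelled a \emph{Conjecture} in the paper, and the paper does not claim a proof of it: it confirms it only for the bootstrap class $\FC$ of \autoref{bootstrap-definition}, i.e.\ groups built from the trivial group by countable directed unions and extensions by $\IZ$, which is a proper subclass even of the torsion-free \emph{elementary} amenable groups. Your opening reduction --- exhibit one absorbing model action, deduce that it is semi-strongly self-absorbing, and obtain uniqueness by an Elliott-type intertwining of cocycle morphisms --- accurately describes the architecture the paper uses where it does prove something: \autoref{thm:ssa-induction} shows the conjectural statement is closed under extensions by groups in $\FC$, and \autoref{cor:ssa-uniqueness-C} and \autoref{cor:ssa-absorption-C} use the noncommutative Bernoulli shift as the model action exactly as you suggest. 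To that extent your first two paragraphs are a faithful account of the paper's strategy for its actual theorems.

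The rest of your text, however, is a research programme rather than a proof, and it has genuine gaps that you yourself flag. First, the extension step rests on the relative Rokhlin dimension with \emph{commuting} towers from \cite{Szabo18rd}, and the paper explicitly notes that the ability to arrange commuting towers is ``in a sense a special feature of $\IZ$ as a (relative) acting group''; an Ornstein--Weiss quasi-tiling produces approximately invariant partitions of unity but gives no control whatsoever on commutation between the different towers, and without that control the absorption theorem \autoref{thm:dimrok-absorption} is not applicable. You do not supply this missing ingredient. Second, a transfinite induction along Chou's decomposition of elementary amenable groups requires closing under extensions by finite groups and by arbitrary abelian quotients arising at each stage, and for a torsion-free group these subquotients need not be torsion-free nor lie in $\FC$ (the paper points to torsion-free poly-cyclic groups that are not poly-$\IZ$ as witnesses that $\FC$ is strictly smaller than the torsion-free elementary amenable groups); so even the elementary amenable case does not follow from iterating the paper's two closure operations. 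Third, for torsion-free amenable groups that are not elementary amenable there is, as you concede, no inductive structure at all. In short, your proposal reproduces the paper's proof for $\Gamma\in\FC$ in outline but does not prove the conjecture, and the conjecture remains open exactly where the paper leaves it open.
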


On the one hand, we note that torsion a priori gives a $K$-theoretical obstruction to such a rigid behavior, which does not appear in the von Neumann algebraic context.
For example, it is not hard to construct non-cocycle conjugate outer actions on the Cuntz algebra $\CO_2$ for any finite abelian group; cf.\ \cite{Izumi04, Izumi04II, BarlakSzabo17}.
On the other hand, we note that an application of \cite{Jones83} gives the failure of uniqueness for actions of non-amenable groups in the monotracial case, and a much more drastic converse for non-amenable groups has been recently considered in \cite{GardellaLupini18uhf, GardellaLupini18} by Gardella--Lupini.

In the case of abelian groups and $\CD$ satisfying the UCT, \autoref{conjecture-a} has been positively solved in \cite{Szabo17ssa3}, building in a crucial way on both the structure theory of semi-strongly self-absorbing actions and prior work by Matui \cite{Matui08, Matui11} and Izumi--Matui \cite{IzumiMatui10} handling the case $\Gamma=\IZ^d$ when $\CD\ncong\CZ$.
Some special cases for $\CD=\CZ$ have been solved before by Sato \cite{Sato10} and Matui--Sato \cite{MatuiSato12, MatuiSato14}, and one should note that one of Kishimoto's early pioneering works \cite{Kishimoto95} handled the case $\Gamma=\IZ$ and $\CD$ being a UHF algebra.

In this paper, we shall furthermore confirm \autoref{conjecture-a} for the following class of groups.

\begin{defii} \label{bootstrap-definition}
We define $\FC$ to be the smallest class of groups that contains the trivial group, is closed under isomorphism, countable directed unions, and extensions by $\IZ$.
\end{defii}

It is easy to see that the class $\FC$ contains all torsion-free abelian groups, all poly-$\IZ$ groups, but also other examples such as the reduced wreath product $\IZ\wr\IZ$.
Evidently $\FC$ is contained in the class of all torsion-free elementary amenable groups, and it is perhaps less trivial that this inclusion is strict.
For example, it is known that there are torsion-free poly-cyclic groups which are not poly-$\IZ$; see \cite[page 16]{LennoxRobinson}.
For groups in the class $\FC$, our main results are as follows; see \autoref{cor:ssa-uniqueness-C} and \autoref{cor:ssa-absorption-C}, respectively.

\begin{theoremi} \label{first-theorem}
Let $\Gamma\in\FC$.
Let $\CD$ be a strongly self-absorbing \cstar-algebra.
Then any two strongly outer $\Gamma$-actions on $\CD$ are (very strongly) cocycle conjugate.
Moreover, any such action is semi-strongly self-absorbing.
\end{theoremi}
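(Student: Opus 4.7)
The plan is to induct on the inductive construction of the class $\FC$ given in \autoref{bootstrap-definition}: verify the statement for the trivial group, propagate it across extensions by $\IZ$, and lift it through countable directed unions. The base case is immediate, since the only action of the trivial group on $\CD$ is $\id_\CD$, which is trivially semi-strongly self-absorbing as $\CD$ is strongly self-absorbing.

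For the extension step, suppose we have a short exact sequence $1 \to N \to \Gamma \to \IZ \to 1$ with $N \in \FC$ satisfying the conclusion, and let $\alpha, \beta : \Gamma \curvearrowright \CD$ be strongly outer. Their restrictions to $N$ are then strongly outer $N$-actions, hence by the inductive hypothesis they are (very strongly) cocycle conjugate and semi-strongly self-absorbing. To lift this to $\Gamma$, I would combine two ingredients: finite relative Rokhlin dimension of $\alpha$ and $\beta$ over their respective $N$-subactions, obtained from the relative Rokhlin-type machinery of \cite{Szabo18rd} applied to the quotient $\IZ$-action, together with the equivariant McDuff-type absorption results for semi-strongly self-absorbing actions developed in \cite{Szabo18ssa, Szabo18ssa2, Szabo17ssa3}. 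These should combine to give $\alpha \otimes \beta \scc \alpha$ and $\alpha \otimes \beta \scc \beta$, yielding $\alpha \scc \beta$, while running the same argument with $\beta = \alpha$ gives $\alpha \otimes \alpha \scc \alpha$, i.e.\ $\alpha$ is itself semi-strongly self-absorbing. For the directed-union step, write $\Gamma = \bigcup_n \Gamma_n$ with each $\Gamma_n \in \FC$ and apply the inductive hypothesis to each pair of restrictions; a two-sided approximate intertwining of Evans--Kishimoto type along the chain $(\Gamma_n)$ then produces the desired (very strong) cocycle conjugacy of $\alpha$ and $\beta$ as $\Gamma$-actions, and the semi-strong self-absorption property passes to the limit.

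The main obstacle is the extension step. The technical heart is to build equivariant order-zero Rokhlin-type approximations for a lift of the generator of $\IZ \cong \Gamma/N$ that live in the relative central sequence algebra of the $N$-action, and to use these in combination with the semi-strong self-absorption of the restriction $\alpha|_N$ to drive the absorption argument. I also expect the cleanest organization of the induction to run through a strengthened \emph{relative} version of the statement, as indicated in the abstract: fixing a quotient homomorphism $\Gamma \to \Lambda$ with $\Lambda \in \FC$ and proving a corresponding relative uniqueness/absorption, so that the extension step reduces essentially to the rank-one case $\Lambda = \IZ$, where the relative Rokhlin dimension theory of \cite{Szabo18rd} is already well-developed.
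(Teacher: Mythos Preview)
Your overall architecture is right, and it matches the paper's: a bootstrap induction on $\FC$, organized through a relative statement so that the only nontrivial step is the extension by $\IZ$. The directed-union step is handled in the paper by citing \cite[Theorem 5.6]{Szabo17ssa3}, so your Evans--Kishimoto intertwining idea is in the right spirit.

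However, there is a genuine gap in your extension step. The absorption theorem you want to invoke (\autoref{thm:dimrok-absorption}, coming from \cite{Szabo18rd}) has the form: if $\dimrokc(\alpha,N)<\infty$ and $\alpha|_N\cc(\alpha\otimes\gamma)|_N$, then $\alpha\cc\alpha\otimes\gamma$ --- but it requires $\gamma$ to be semi-strongly self-absorbing \emph{as a $\Gamma$-action}, not merely upon restriction to $N$. So you cannot apply it with $\gamma=\beta$ (or $\gamma=\alpha$) to get $\alpha\otimes\beta\cc\alpha$ or $\alpha\otimes\alpha\cc\alpha$: that is precisely what you are trying to prove. Moreover, even if you somehow obtained $\alpha\cc\alpha\otimes\alpha$, this alone does not yield semi-strong self-absorption; one still needs the approximately $\Gamma$-inner (half-)flip.

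The paper closes this gap by a separate argument that does not go through the absorption theorem. One first upgrades from finite relative Rokhlin dimension to the relative \emph{Rokhlin property} (after tensoring with a UHF algebra of infinite type; see \autoref{thm:UHF-abs-Rp}), and then runs a Kishimoto-style cocycle-vanishing argument in $(B_\omega\cap B')^N$ for $B=\CD\otimes\CD$: the flip-implementing unitary $u$, which exists in $B_\omega^N$ by the inductive hypothesis, is corrected by an element of the central sequence algebra so that it becomes $\beta_\omega$-fixed on all of $\Gamma$. This establishes the approximately $\Gamma$-inner flip directly, whence $\gamma^{\otimes\infty}$ is genuinely strongly self-absorbing, and \emph{only then} does one invoke the absorption theorem with $\gamma^{\otimes\infty}$ in the role of the absorbed action. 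You should add this flip argument; without it the induction does not close.
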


\begin{theoremi} \label{second-theorem}
Let $\Gamma\in\FC$.
Let $\CD$ be a strongly self-absorbing \cstar-algebra and $A$ a separable, unital, nuclear, simple, $\CD$-stable \cstar-algebra with at most one trace.
Let $\alpha: \Gamma\curvearrowright A$ be a strongly outer action.
Then for every action $\gamma: \Gamma\curvearrowright\CD$, the actions $\alpha$ and $\alpha\otimes\gamma$ are (very strongly) cocycle conjugate.
\end{theoremi}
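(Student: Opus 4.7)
The plan is to induct on the structure of the class $\FC$. Let $\FC_0$ denote the collection of groups $\Gamma$ for which the conclusion of the theorem holds, for every triple $(\CD, A, \alpha)$ and every action $\gamma$ satisfying the hypotheses. I aim to prove $\FC_0=\FC$ by verifying the three closure properties used to build $\FC$. The base case $\Gamma=\{e\}$ is just the statement $A\cong A\otimes\CD$, which is Toms--Winter's uniqueness of $\CD$-stability. For countable directed unions $\Gamma=\bigcup_n \Gamma_n$ with $\Gamma_n\in\FC_0$, I would apply the induction hypothesis to each restriction $\alpha|_{\Gamma_n}$ in turn and assemble the resulting partial absorptions into a single very strong cocycle conjugacy via a two-sided equivariant Elliott-type intertwining; here \autoref{first-theorem} is used in an essential way, because at each stage the cocycle conjugacy furnished by the induction hypothesis must be perturbed so as to cohere with the next restriction, and uniqueness of strongly outer actions on $\CD$ is precisely what licenses such perturbations.

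The core of the argument is the closure of $\FC_0$ under extensions by $\IZ$. Suppose $1\to N\to\Gamma\to\IZ\to 1$ is exact with $N\in\FC_0$. Since $\alpha$ is strongly outer, so is $\alpha|_N$, and the induction hypothesis gives $\alpha|_N \vscc \alpha|_N\otimes\gamma|_N$. The task is to promote this $N$-equivariant absorption to a $\Gamma$-equivariant one. For this I would apply the relative Rokhlin dimension framework from \cite{Szabo18rd}: strong outerness of $\alpha$ together with the structural properties of $A$ forces the $\IZ$-quotient action to have finite Rokhlin dimension relative to $\alpha|_N$ on the appropriate equivariant central sequence algebra. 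Combined with the semi-strong self-absorption of $\alpha|_N\otimes\gamma|_N$ with respect to $\alpha|_N$ provided by \autoref{first-theorem} (applied inside the relative central sequence algebra), a Rokhlin-tower assembly of an approximate $\Gamma$-equivariant unital $*$-homomorphism $(\CD,\gamma)\to F_\infty(A,\alpha)$ becomes possible, which by the standard reformulation of very strong cocycle conjugacy delivers $\alpha\vscc\alpha\otimes\gamma$.

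The main obstacle is the monotracial sub-case of the extension step, where one cannot directly build honest projection Rokhlin towers inside the central sequence algebra and must instead work with ``tracial'' Rokhlin-type approximations. Following the strategy signalled in the abstract, I would invoke the equivariant property (SI) of Matui--Sato to bridge this gap: equivariant (SI), applied to the $N$-action, provides the comparison theorem needed to convert tracially small contractions produced from the finite relative Rokhlin dimension estimate into genuine norm-$0$ equivalences inside the relative central sequence algebra, which is exactly what the Rokhlin assembly needs to proceed. In the case where $A$ has no trace, the algebra is purely infinite by the usual Kirchberg--Rørdam dichotomy applied under $\CD$-stability, and the argument simplifies since comparison is automatic. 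The delicate point throughout is that all approximations must be performed \emph{relative} to the $N$-action, so the whole argument really lives inside an $N$-equivariant version of $F_\infty(A)$, and the compatibility of relative Rokhlin dimension with equivariant (SI) in this relative setting is the key technical input that makes the induction go through.
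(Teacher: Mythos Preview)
Your overall architecture---induction on the bootstrap structure of $\FC$, with the extension-by-$\IZ$ step handled via relative Rokhlin dimension and equivariant property (SI) in the finite case---matches the paper's. But there is a genuine gap in how you set up the induction.

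You carry as induction hypothesis the full statement ``$\alpha\vscc\alpha\otimes\gamma$ for \emph{every} $\gamma:\Gamma\curvearrowright\CD$'', and in the extension step you appeal to the absorption machinery of \cite{Szabo18rd} (recorded here as \autoref{thm:dimrok-absorption}). That theorem, however, has as a standing hypothesis that $\gamma$ be semi-strongly self-absorbing and unitarily regular; this is not a property an arbitrary $\Gamma$-action on $\CD$ enjoys, and your invocation of \autoref{first-theorem} does not supply it, since \autoref{first-theorem} concerns only \emph{strongly outer} actions on $\CD$, whereas your $\gamma$ (and hence $\gamma|_N$) is completely general. The phrase ``semi-strong self-absorption of $\alpha|_N\otimes\gamma|_N$ with respect to $\alpha|_N$'' also conflates two distinct notions: the induction hypothesis $\alpha|_N\cc(\alpha\otimes\gamma)|_N$ is not the same as, and does not yield, semi-strong self-absorption of $\gamma$.

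The paper resolves this by decoupling the two pieces. \autoref{thm:absorption-induction} runs the bootstrap induction only for semi-strongly self-absorbing $\gamma$; under that restriction the extension step is an immediate application of \autoref{thm:dimrok-absorption} once $\dimrokc(\alpha,H)<\infty$ is established (via \autoref{thm:dimrok2} or \autoref{thm:pi-Rp}), and the directed-union step is a black-box citation of \cite[Theorem 5.6(ii)]{Szabo17ssa3} requiring neither an Elliott intertwining nor \autoref{first-theorem}. The passage to arbitrary $\gamma$ is then a short reduction carried out afterwards in \autoref{cor:ssa-absorption-C}: fix any strongly outer $\gamma^0$ (e.g.\ the Bernoulli shift), which is semi-strongly self-absorbing by \autoref{first-theorem}, so that $\alpha\vscc\alpha\otimes\gamma^0$; for arbitrary $\gamma$ the action $\gamma^0\otimes\gamma$ is again strongly outer, hence $\gamma^0\vscc\gamma^0\otimes\gamma$ by uniqueness, and therefore $\alpha\vscc\alpha\otimes\gamma$. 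Inserting this reduction at the outset repairs your outline.

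A minor point on property (SI): in the paper it is not used downstream to upgrade tracial Rokhlin towers after a dimension estimate. It is used upstream, in \autoref{lem:existence}, to produce unital equivariant embeddings of prime dimension drop algebras (carrying a shift-type $\Gamma$-action) into $(A_\omega\cap A',\alpha_\omega)$; the bound $\dimrokc(\alpha,H)\le 2$ is then read off from elementary calculations inside those dimension drop algebras (\autoref{lem:dd-dimrok2}, \autoref{thm:dimrok2}).
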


The proof of \autoref{first-theorem} presented in this paper is self-contained and does not rely on any special cases treated elsewhere.
Moreover, as one might have hoped, we can dispense with the UCT assumption in our approach, because the proof relies just on strong self-absorption rather than the precise fine structure of the underlying \cstar-algebra $\CD$.\footnote{One may of course note that the UCT could possibly be redundant in this context; cf.\ \cite[Corollary 6.7]{TikuisisWhiteWinter17}. 
Due to the presently mysterious status of the UCT problem, however, it is fair to say that a proof not needing it can be regarded as more satisfactory.}
We note that the purely infinite case within our main result has significant overlap with part of Izumi--Matui's treatment of poly-$\IZ$ group actions on Kirchberg algebras \cite{IzumiMatui18}, which they have proved long before this work was initiated, even though their work has yet to be published; see also \cite{Izumi12OWR}.

The most important tool in the proof of our main results is given by the interplay between Rokhlin dimension relative to subgroups and the absorption of semi-strongly self-absorbing actions, as established in \cite{Szabo18rd}.
In essence, our proof goes by showing that, if viewed as a property of amenable groups $\Gamma$, the statements in the two theorems above are closed under extensions by $\IZ$.
Since the permanence properties from \cite{Szabo17ssa3} also show that these statements are closed under countable direct unions of groups, the main results then simply follow from the definition of $\FC$ as a bootstrap class.
In fact it follows by this approach more generally that the statement of \autoref{conjecture-a} is closed under extensions by groups in $\FC$; this is recorded in \autoref{thm:ssa-induction}.

In order to handle extensions by $\IZ$, one needs to show that all strongly outer actions as in \autoref{second-theorem} satisfy certain Rokhlin-type conditions relative to any normal subgroup $H$ such that $\Gamma/H\cong\IZ$, which allows one to apply the main result from \cite{Szabo18rd}.
This Rokhlin-type theorem is the only ingredient where the proof has to handle the case of finite and infinite \cstar-algebras separately.
The desired property follows without too much effort from \cite{Szabo18kp} and \cite{Nakamura00} in the purely infinite case, but more work is required for the monotracial case.
Among other methods, we arrange a relative version of finite Rokhlin dimension with commuting towers by embedding certain equivariant copies of prime dimension drop algebras into central sequence algebras, which exploits Matui--Sato's notion of equivariant property (SI) in a crucial way \cite{MatuiSato12, MatuiSato14, Sato17}.\footnote{We note that in a similar but slightly different context, the type of approximately central embedding technique utilized here has also been independently discovered in ongoing work of Gardella--Phillips--Wang \cite{GardellaPhillipsWang18}.}
Compared to similar Rokhlin-type theorems such as \cite{Liao16, Liao17}, the key difference is that we can arrange the resulting Rokhlin towers to commute with each other, which is both necessary for the methods of \cite{Szabo18rd} to be applicable and is in a sense a special feature of $\IZ$ as a (relative) acting group.

It may be relevant to note that the assumption about having ``at most one trace'' in \autoref{second-theorem} can be dispensed with at the cost of assuming that the group action in question has Matui--Sato's weak Rokhlin property, which is a priori more than strong outerness.
However, it is well-known by Matui--Sato's work that the weak Rokhlin property is equivalent to strong outerness in the monotracial case and in fact the proofs of our main results can be performed in this setup without having to refer to the weak Rokhlin property at all.
It is therefore a conscious decision not to go beyond the monotracial case in this paper, in favor of a more in-depth study of the connections between strong outerness and the weak Rokhlin property warranted by this remark, which shall be the subject of subsequent work.

The paper is organized as follows:
In Section \ref{sec:1}, we remind the reader about the underlying concepts of this paper such as cocycle conjugacy, (central) sequence algebras constructed from free ultrafilters, strong outerness, and property (SI).
Most of Section \ref{sec:2} is dedicated to the monotracial case of our main results, in particular showing the required relative Rokhlin-type theorem for strongly outer actions.
Once the Rokhlin-type theorem is in place, we obtain a proof of our main results in Section \ref{sec:3} as an application of various results and modified ideas from \cite{Szabo17ssa3, Szabo18rd}.


\section{Preliminaries}
\label{sec:1}

Throughout the paper, we will freely use basic techniques from the Elliott classification program for simple nuclear \cstar-algebras \cite{Rordam}, as well as their general structure theory.
In particular we assume familiarity with nuclearity \cite{BrownOzawa}, strongly self-absorbing \cstar-algebras \cite{TomsWinter07}, the Jiang--Su algebra \cite{JiangSu99}, and order zero maps \cite{WinterZacharias09}.
Moreover we refer to \cite{Szabo18ssa, Szabo18ssa2, Szabo17ssa3} for the theory of (semi-)strongly self-absorbing actions.

\begin{defi}
Let $\Gamma$ be a countable discrete group.
Let $\alpha: \Gamma\curvearrowright A$ and $\beta: \Gamma\curvearrowright B$ be two  actions on unital \cstar-algebras.
We say that $\alpha$ and $\beta$ are cocycle conjugate, written $\alpha \cc \beta$, if there exists an isomorphism $\phi: A\to B$ and an $\alpha$-cocycle $\set{w_g}_{g\in\Gamma}\subset \CU(A)$ with 
\[
\ad(w_g)\circ\alpha_g=\phi^{-1}\circ\beta_g\circ\phi \quad\text{for all } g\in\Gamma.
\]

If it is possible to choose $\phi$ and $w$ such that there exists a sequence $x_n\in\CU(A)$ with $z_g=\lim_{n\to\infty} x_n\alpha_g(x_n^*)$ for all $g\in\Gamma$, then $\alpha$ and $\beta$ are said to be strongly cocycle conjugate, written $\alpha\scc \beta$.

If it is moreover possible to choose $\phi$ and $w$ such that there exists a continuous path $x: [0,\infty)\to\CU(A)$ with $x_0=\eins$ such that $w_g=\lim_{t\to\infty} x_t\alpha_g(x_t^*)$ for all $g\in\Gamma$, then $\alpha$ and $\beta$ are said to be very strongly cocycle conjugate, written $\alpha\vscc \beta$.
\end{defi}

\begin{defi}
Fix a free ultrafilter $\omega$ on $\IN$.
Let $A$ be a \cstar-algebra.
One defines the ultrapower of $A$ as
\[
A_\omega=\ell^\infty(\IN,A)/\set{ (x_n)_n \mid \lim_{n\to\omega} \| x_n\| = 0 }.
\]
The constant sequences yield a canonical copy of $A$ in the ultrapower.
The central sequence algebra is the relative commutant
\[
A_\omega\cap A' = \set{ x\in A_\omega \mid [x,A]=0 }.
\]
\end{defi}

\begin{nota}
If $\alpha: \Gamma\curvearrowright A$ is an action of a discrete group, then we obtain the ultrapower action $\alpha_\omega: \Gamma\curvearrowright A_\omega$ via componentwise application, and in fact this restrict to an action on $A_\omega\cap A'$ as well.
If $H\subseteq\Gamma$ is a fixed subgroup, then we write $A_\omega^H$ or $(A_\omega\cap A')^H$ for the fixed point algebra with respect to $\alpha_\omega|_H$.
If $H$ is additionally normal, then we have an induced action $\Gamma/H\curvearrowright A_\omega^H$ via $(gH).a = \alpha_{\omega,g}(a)$ for all $g\in\Gamma$ and $a\in A_\omega^H$.
\end{nota}

The following concept has its origins in the work of Kirchberg \cite{Kirchberg04} and Kirchberg--Rordam \cite{KirchbergRordam14} on central sequences of \cstar-algebras; see also \cite[Definition 5.5]{Liao16}.

\begin{defi}[see {\cite[Definition 4.1]{Szabo18ssa2}}]
Let $\alpha: \Gamma\curvearrowright A$ be an action of a discrete group on a \cstar-algebra.
An $\alpha$-invariant ideal $J\subseteq A$ is called a $G$-$\sigma$-ideal, if for every separable $\alpha$-invariant \cstar-subalgebra $C\subset A$, there exists a positive contraction $e\in (J\cap C')^\alpha$ such that $ec=c=ce$ for all $c\in J\cap C$.
\end{defi}

\begin{prop}[see {\cite[Proposition 4.5]{Szabo18ssa2}}] \label{prop:sigma-ideal-lss}
Let $\alpha: \Gamma\curvearrowright A$ be an action and $J\subseteq A$ a $G$-$\sigma$-ideal.
Let $B=A/J$, $\pi: A\to B$ the quotient map, and $\beta: \Gamma\curvearrowright B$ the action induced on the quotient.
Then
\begin{enumerate}[label=\textup{(\roman*)},leftmargin=*]
\item For every separable $\alpha$-invariant \cstar-subalgebra $C\subset A$, the restriction $\pi: A\cap C'\to B\cap\pi(C)'$ is surjective.
\item For every separable $\beta$-invariant \cstar-subalgebra $D\subset B$, there is an equivariant c.p.c.\ order zero map $\psi: (D,\beta)\to (A,\alpha)$ such that $\pi\circ\psi=\id_D$.
\end{enumerate}

\end{prop}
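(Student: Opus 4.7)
The plan is to use the positive contraction $e$ supplied by the $G$-$\sigma$-ideal condition as an equivariant cutoff. After enlarging the given separable subalgebra to be $\alpha$-invariant, the property provides $e \in J$ which is $\alpha$-fixed, commutes with everything in the enlarged subalgebra, and acts as a unit on its intersection with $J$. Then the assignment $x \mapsto x - ex$ vanishes on that intersection while descending to the identity modulo $J$, and this serves as the key construction for both items.

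For (i), I would start with $b \in B \cap \pi(C)'$, pick any contractive lift $a_0 \in A$, and form the separable $\alpha$-invariant \cstar-subalgebra
\[
C_1 := \cstar\bigl(C \cup \{\alpha_g(a_0) : g \in \Gamma\}\bigr).
\]
Applying the $\sigma$-ideal condition to $C_1$ yields $e \in (J \cap C_1')^\alpha$ acting as a unit on $J \cap C_1$. For each $c \in C$ the commutator $[a_0,c]$ lies in $J \cap C_1$ ($J$ because $[b,\pi(c)] = 0$, and $C_1$ by construction), so $e[a_0,c] = [a_0,c]$; combined with $e \in C_1' \subseteq C'$ this forces $[a,c] = [a_0,c] - e[a_0,c] = 0$ for $a := a_0 - ea_0$. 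Since $e \in J$, also $\pi(a) = b$, producing the required lift in $A \cap C'$.

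For (ii), I would fix contractive lifts $a_n \in A$ of a dense sequence $(d_n)$ in $D$ and form the separable $\alpha$-invariant \cstar-subalgebra
\[
\tilde D := \cstar\bigl(\{\alpha_g(a_n) : g \in \Gamma,\ n \in \IN\}\bigr),
\]
which satisfies $\pi(\tilde D) = D$ and $\ker(\pi|_{\tilde D}) = J \cap \tilde D$. The $\sigma$-ideal property then supplies $e \in (J \cap \tilde D')^\alpha$ acting as a unit on $J \cap \tilde D$, and I define $\phi : \tilde D \to A$ by $\phi(x) := x - ex$. Writing $\phi$ in the unitization as the compression $\phi(x) = (1-e)^{1/2} x (1-e)^{1/2}$ exhibits it as c.p.c.; and $\phi$ vanishes on $J \cap \tilde D$ (since $ex = x$ there), so it descends to a map $\psi : D \to A$ satisfying $\pi \circ \psi = \id_D$ (because $\pi(e) = 0$). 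For any $x, y \in \tilde D$ with $xy \in J \cap \tilde D$, the relations $[e,\tilde D] = 0$ and $e(xy) = xy$ yield $\phi(x)\phi(y) = (1-e)^2 xy = 0$, which shows $\psi$ is order zero. Equivariance of $\psi$ is immediate from $\alpha_g(e) = e$.

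The only delicate point in either part is that the separable subalgebra supplied to the $\sigma$-ideal condition must be simultaneously $\alpha$-invariant (needed in (ii) so that $\alpha_g(e) = e$, which in turn gives equivariance of $\psi$) and large enough to contain the commutators or lifts one wishes to correct (so that the annihilation relation $ec = c$ on $J \cap C$ applies to them). The orbit-closure constructions $C_1$ and $\tilde D$ above settle this at once, after which every remaining verification reduces to formal manipulations with $e \in \tilde D'$, $\alpha_g(e) = e$, and $ec = c$ on $J \cap \tilde D$.
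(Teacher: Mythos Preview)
The paper does not supply its own proof of this proposition; it is quoted verbatim from \cite[Proposition~4.5]{Szabo18ssa2} and left without argument. Your write-up is correct and is essentially the standard proof of this fact (originating in Kirchberg's treatment of $\sigma$-ideals): enlarge to an $\alpha$-invariant separable subalgebra, extract the invariant quasicentral element $e$, and use the cutoff $x\mapsto (1-e)x$ to correct lifts. Both the commutator-killing in (i) and the verification that $(1-e)^2xy=0$ whenever $xy\in J\cap\tilde D$ in (ii) are handled cleanly, so there is nothing to compare or correct.
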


\begin{defi}[cf.\ {\cite[Section 4]{KirchbergRordam14}}] \label{def:limit-trace}
Now assume that $A$ is unital and has a unique tracial state $\tau$.
We define the limit trace $\tau^\omega$ on $A_\omega$ via
\[
\tau^\omega\big( [(x_n)_n] \big) = \lim_{n\to\omega} \tau(x_n).
\]
We define $\| x\|_{p,\omega}=\tau^\omega(| x|^p)^{1/p}$ for all $p\in [1,\infty)$.
The trace-kernel ideal in $A_\omega$ is given by
\[
\CJ_A = \set{ x\in A_\omega  \mid \| x\|_{p,\omega}=0 \text{ for all (or some) } p }.
\]
The tracial ultrapower is the quotient $A^\omega=A_\omega/\CJ_A$.
By Kaplansky's density theorem, it is easy to see (cf.\ \cite[Theorem 3.3]{KirchbergRordam14}) that for the weak closure $M=\pi_\tau(A)''$, the canonical inclusion $A^\omega\subseteq M^\omega$ into the von Neumann algebraic tracial ultrapower is in fact an isomorphism, which restrict to an isomorphism $A^\omega\cap A'\cong M^\omega\cap M'$.
\end{defi}

\begin{nota}
In the above situation, if $\alpha: \Gamma\curvearrowright A$ is any action of a countable discrete group, then the trace-kernel ideal $\CJ_A\subset A_\omega$ is $\alpha_\omega$-invariant.
Thus the componentwise application of $\alpha$ gives rise to an action $\alpha^\omega: \Gamma\curvearrowright A^\omega$ on the tracial ultrapower.
\end{nota}

The following is always true regardless of the structure of the tracial simplex of $A$, but we will stick to the monotracial case as it is enough for our present purpose.

\begin{prop} \label{prop:trace-kernel-sigma}
Let $\alpha: \Gamma\curvearrowright A$ be an action of a countable discrete group on a unital monotracial \cstar-algebra. 
Then the trace-kernel ideal $\CJ_A\subset A_\omega$ is a $G$-$\sigma$-ideal with respect to the ultrapower action $\alpha_\omega$.
\end{prop}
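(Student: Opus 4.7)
The plan is to verify both parts of the definition of a $G$-$\sigma$-ideal for $\CJ_A$.

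First, the $\alpha_\omega$-invariance of $\CJ_A$ is immediate from uniqueness of the trace: $\tau\circ\alpha_g=\tau$ for every $g\in\Gamma$ forces $\tau^\omega\circ\alpha_{\omega,g}=\tau^\omega$, so $\CJ_A=\{x\in A_\omega:\tau^\omega(x^*x)=0\}$ is preserved by $\alpha_\omega$.

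For the approximate-unit condition, fix a separable $\alpha_\omega$-invariant \cstar-sub\-algebra $C\subseteq A_\omega$. The strategy combines the non-equivariant $\sigma$-ideal property of $\CJ_A$, due to Kirchberg--R\o rdam \cite[Section 4]{KirchbergRordam14}, with an Ocneanu-type reindexation scheme. Enumerate $\Gamma=\{g_n\}_{n\in\IN}$, and construct inductively positive contractions $e_1,e_2,\ldots\in\CJ_A$ such that each $e_n$ commutes with, and acts as unit on $\CJ_A\cap C_n$, where $C_n\subseteq A_\omega$ is the separable $\alpha_\omega$-invariant \cstar-subalgebra generated by $C$ together with $\{e_1,\ldots,e_{n-1}\}$. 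Since $\Gamma$ is countable, each $C_n$ is separable, so the existence of $e_n$ at stage $n$ is secured by the non-equivariant $\sigma$-ideal property applied to $C_n$. By invariance of $C_n$, one obtains $e_n\cdot\alpha_{\omega,g_i}(e_m)=\alpha_{\omega,g_i}(e_m)$ for all $m<n$ and $i\in\IN$, and upon applying $\alpha_{\omega,g_i^{-1}}$ also $\alpha_{\omega,g_i^{-1}}(e_n)\cdot e_m=e_m$.

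Finally, a diagonal reindexation within $A_\omega$ of representative sequences $(e_n^{(k)})_k\subset A$ extracts a single positive contraction $e\in\CJ_A\cap C'$ that acts as unit on every $e_n$, on every $\alpha_{\omega,g_i}(e_n)$, and on $\CJ_A\cap C$. From the identities above, both $e$ and every $\alpha_{\omega,g_i}(e)$ act as units on the growing family $(e_n)$, so $e-\alpha_{\omega,g_i}(e)$ is annihilated by each $e_n$. The main obstacle, and the central technical point, is to arrange the reindexation so that the family $(e_n)$ acts as an exhaustive approximate unit on $e$ itself in operator norm, forcing $e-\alpha_{\omega,g_i}(e)=0$ for every $i$. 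This is achieved by a final application of Kirchberg's $\eps$-test, which is available precisely because countability of $\Gamma$ and separability of $C$ reduce the invariance property to countably many conditions that can be simultaneously satisfied in the ultrapower.
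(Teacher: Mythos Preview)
Your argument has a genuine gap at precisely the point you flag as ``the main obstacle.'' The $\eps$-test can only deliver exact invariance of $e$ if you first exhibit, for each finite $K\subset\Gamma$ and each $\eps>0$, a positive contraction in $\CJ_A$ that approximately commutes with $C$, approximately acts as a unit on $\CJ_A\cap C$, \emph{and} satisfies $\|\alpha_{\omega,g}(\cdot)-\cdot\|<\eps$ for $g\in K$. Your sequence $(e_n)$ does not have this last property: you only establish that $(e_n-\alpha_{\omega,g}(e_n))\cdot e_m=0$ for $m<n$, which says nothing about $\|e_n-\alpha_{\omega,g}(e_n)\|$, because $(e_m)_{m<n}$ is nowhere near an approximate unit for the non-separable ideal $\CJ_A$ in operator norm. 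Moreover, the two conditions you try to impose on $e$ via reindexation --- that $e$ act as a unit on each $e_n$, and that $(e_n)$ act as an approximate unit on $e$ in norm --- are mutually incompatible: together they force $e_n e=e_n$ and $\|e_n-e\|\to 0$, hence $(e_n)$ would have to converge in norm, which you have no reason to expect. So the final invocation of the $\eps$-test is not justified; approximate satisfiability of the invariance condition is exactly what is missing, and nothing in your construction supplies it.

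The paper closes this gap by appealing directly to Kasparov's general fact \cite[Proposition~1.4]{Kasparov88} that any $G$-invariant ideal admits positive contractions that are simultaneously approximately quasicentral and approximately $G$-invariant in norm (obtained via a Hahn--Banach/convexity argument applied to a quasicentral approximate unit). With that one ingredient, the proof is literally the non-equivariant $\eps$-test argument of \cite[Proposition~4.6]{KirchbergRordam14} with an extra countable family of norm conditions. Your iterative bootstrap bypasses Kasparov but never recovers the approximate-invariance input it provides, and I do not see how to repair it without essentially reproving Kasparov's lemma.
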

\begin{proof}
The proof follows almost verbatim as in \cite[Proposition 4.6]{KirchbergRordam14} by applying the so-called $\eps$-test.
For this one only needs to know that $\CJ_A$ admits an approximate unit consisting of approximately $\alpha_\omega$-invariant elements quasicentral relative to $A_\omega$, which is a general fact \cite[Proposition 1.4]{Kasparov88} due to Kasparov.
We omit the details.
\end{proof}

\begin{defi}
Let $A$ be a unital \cstar-algebra with $T(A)\neq\emptyset$.
An automorphism $\alpha$ on $A$ is called strongly outer, if it is outer, and moreover for every $\alpha$-invariant trace $\tau\in T(A)$, the induced automorphism of $\alpha$ on the weak closure $\pi_\tau(A)''$ is outer.\footnote{In particular, ``strongly outer'' is defined as ``outer'' when $A$ is traceless.}

If $\alpha: \Gamma\curvearrowright A$ is an action of a discrete group, then we say that it is (pointwise) strongly outer, if $\alpha_g$ is strongly outer whenever $g\neq 1$.
\end{defi}

\begin{rem}
In the above definition, if $A$ has a unique trace $\tau$, then an action $\alpha$ is strongly outer precisely when the action induced on the weak closure $\pi_\tau(A)''$ is an outer action in the sense of von Neumann algebras.
When $A$ is nuclear and infinite-dimensional, then it is known that $\pi_\tau(A)''$ yields the hyperfinite II$_1$-factor $\CR$ (cf.\ \cite{Connes76}), which we will use frequently.
\end{rem}

\begin{defi}[cf.\ {\cite[Proposition 4.5]{MatuiSato14} and \cite[Proposition 5.1]{Sato17}}]
Let $\Gamma$ be a countable discrete group.
Let $\alpha:\Gamma\curvearrowright A$ be an action on a separable, simple, unital, monotracial \cstar-algebra.
We say that $A$ has equivariant property (SI) relative to $\alpha$, if the following holds:

Given two positive contractions $e, f\in (A_\omega\cap A')^{\alpha_\omega}$ satisfying
\[
\|e\|_{1,\omega}=0,\quad \inf_{k\in\IN} \|\eins-f^k\|_{1,\omega} < 1,
\]
there exists a contraction $s\in (A_\omega\cap A')^{\alpha_\omega}$ such that
\[
e=s^*s \quad\text{and}\quad fs=s
\]
In particular, in case $\Gamma=\set{1}$ we say that $A$ has property (SI).
\end{defi}

\begin{theorem} \label{thm:MS-SI}
Let $A$ be a separable, simple, nuclear, unital, monotracial \cstar-algebra.
Suppose that $A$ has strict comparison.
Then for every action $\alpha: \Gamma\curvearrowright A$ of a discrete amenable group, $A$ has property (SI) relative to $\alpha$.
Moreover, $\alpha$ is equivariantly $\CZ$-absorbing.
\end{theorem}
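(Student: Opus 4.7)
The plan is to combine the non-equivariant property (SI) due to Matui--Sato with an averaging argument exploiting the amenability of $\Gamma$ and the $\Gamma$-$\sigma$-ideal property of $\CJ_A$ recorded in \autoref{prop:trace-kernel-sigma}; the equivariant $\CZ$-absorption will then follow from the resulting equivariant (SI) by the standard recipe of splicing approximately central equivariant embeddings of prime dimension drop algebras.

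For the equivariant property (SI): given $e,f\in (A_\omega\cap A')^{\alpha_\omega}$ as in the statement, I would first invoke non-equivariant (SI) from \cite{MatuiSato14}---available since $A$ is separable, simple, nuclear, monotracial with strict comparison---to obtain some (not necessarily invariant) $s_0\in A_\omega\cap A'$ with $e=s_0^*s_0$ and $fs_0=s_0$. Pushing $s_0$ down to $A^\omega\cap A'\cong \CR^\omega\cap\CR'$ via the quotient by $\CJ_A$, I would next use amenability of $\Gamma$ by an Ocneanu-style averaging over a Følner sequence: since $\CR^\omega\cap\CR'$ is a McDuff II$_1$ factor and $\alpha^\omega$ acts by trace-preserving automorphisms, averaging unitary conjugates of the image of $s_0$ produces an exactly $\alpha^\omega$-invariant element $\bar s\in (A^\omega\cap A')^{\alpha^\omega}$ still satisfying the relations $\bar e=\bar s^*\bar s$ and $\bar f\bar s=\bar s$. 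Finally I would pull $\bar s$ back to $(A_\omega\cap A')^{\alpha_\omega}$ using the $\Gamma$-$\sigma$-ideal structure: \autoref{prop:sigma-ideal-lss}(ii) applied to the separable $\alpha^\omega$-invariant \cstar-subalgebra generated by $e,f,\bar s$ yields an equivariant c.p.c.\ order zero lift, and a small correction using functional calculus on $f$ (to restore $fs=s$) combined with a polar-type adjustment (to restore $e=s^*s$) produces an exact solution $s\in (A_\omega\cap A')^{\alpha_\omega}$, invoking the $\eps$-test if needed to handle the approximation bookkeeping.

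For equivariant $\CZ$-absorption: with equivariant (SI) in hand, I would follow the Matui--Sato strategy. For each pair of coprime $p,q$, produce an approximately central, $\Gamma$-equivariant, unital embedding of the prime dimension drop algebra into $(A_\omega\cap A')^{\alpha_\omega}$ by first constructing the corresponding embedding in the tracial ultrapower (where McDuffness of $A^\omega\cap A'$ and amenability of $\Gamma$ easily supply it after averaging) and then lifting through the $\Gamma$-$\sigma$-ideal $\CJ_A$ via \autoref{prop:sigma-ideal-lss}(ii). Equivariant (SI) then allows one to align the positive and unitary parts so as to splice these into a unital equivariant $*$-homomorphism from $\CZ$, which by standard reasoning yields $\alpha\cc\alpha\otimes\id_\CZ$.

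The main obstacle is the last part of the (SI) argument: one has to pass from an invariant \emph{approximate} solution in $A_\omega\cap A'$ (all the lifting steps are only approximate) to an \emph{exact} solution that is \emph{exactly} $\alpha_\omega$-invariant and \emph{exactly} satisfies $e=s^*s$, $fs=s$. This requires carefully combining the $\eps$-test with the equivariant order zero lift of \autoref{prop:sigma-ideal-lss}(ii) and the fact that Kasparov-type quasicentral, approximately invariant approximate units for $\CJ_A$ exist; once all of these ingredients are in place, the final assembly is routine.
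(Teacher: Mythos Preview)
The paper does not prove this theorem; it cites it. The non-equivariant case is \cite[Section 4]{MatuiSato12acta}, and the equivariant statements are assembled from \cite[Propositions 4.4, 5.1 and Theorem 5.2]{Sato17} together with \cite[Corollary 3.8]{Szabo18ssa}. Your proposal instead attempts to reprove the result from scratch, which is a different and harder undertaking.

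Your argument for equivariant property (SI) has a genuine gap. You propose to push the non-equivariant solution $s_0$ down to the tracial ultrapower $A^\omega\cap A'$, average there over a F{\o}lner sequence, and then lift back. But $s_0^*s_0=e$ with $\|e\|_{1,\omega}=0$, so $s_0\in\CJ_A$, and hence the image of $s_0$ in $A^\omega$ is already zero. The averaging step is therefore vacuous: you obtain $\bar s=0$, which trivially satisfies $\bar e=0=\bar s^*\bar s$ and $\bar f\cdot 0=0$, and lifting zero back through the $\sigma$-ideal gives you no leverage whatsoever on the equations $e=s^*s$, $fs=s$ in $A_\omega$. Property (SI) is precisely a statement about elements that vanish tracially, so any argument factoring through $A^\omega$ discards exactly the data you need. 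You have also misidentified the obstacle: the difficulty is not the final bookkeeping of the lift, but that the averaging you describe is empty.

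The actual proofs in \cite{MatuiSato12, MatuiSato14, Sato17} do not pass to $A^\omega$ for this step. They revisit the construction of $s$ from the non-equivariant (SI) argument and perform the F{\o}lner-type averaging at the level of $A_\omega$ itself (on representing sequences), controlling the resulting norm and tracial errors directly; this is considerably more delicate than what you outline.

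Your sketch for equivariant $\CZ$-absorption is closer to the mark and indeed mirrors what the paper carries out later in \autoref{lem:existence}, but since it takes equivariant (SI) as input, it does not stand on its own until the first part is repaired.
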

\begin{proof}
For $\Gamma=\set{1}$, this is a special case of a well-known major insight given by Matui--Sato's work \cite[Section 4]{MatuiSato12acta}. 

For the general case, combine Propositions 4.4 and 5.1 from \cite{Sato17}.
The last part of the claim is a special case of \cite[Theorem 5.2]{Sato17} together with \cite[Corollary 3.8]{Szabo18ssa}; see also \cite[Theorem 4.9]{MatuiSato14} for a previous similar theorem.
\end{proof}


\section{Relative Rokhlin-type conditions}
\label{sec:2}

\begin{defi}[cf.\ {\cite[Definition 4.1]{Szabo18rd}}] \label{def:rel-Rokhlin-type-conditions}
Let $\Gamma$ be a countable discrete group.
Let $H\subset\Gamma$ be a normal subgroup such that $\Gamma/H\cong\IZ$, and let $g_0\in\Gamma$ be an element generating the quotient.
Let $\alpha:\Gamma\curvearrowright A$ be an action on a separable unital \cstar-algebra.
\begin{enumerate}[label=\textup{(\roman*)},leftmargin=*]
\item The Rokhlin dimension of $\alpha$ with commuting towers relative to $H$, denoted $\dimrokc(\alpha, H)$, is the smallest natural number $d\geq 0$ such that the following holds.
For every $n\geq 1$, there exist equivariant c.p.c.\ order zero maps
\[
\phi^{(0)},\dots,\phi^{(d)}: \big( \CC(\IZ/n\IZ), \IZ\textup{-shift}\big) \to \big( (A_\omega\cap A')^{H}, \alpha_{\omega,g_0} \big)
\]
with pairwise commuting ranges such that
\[
\phi^{(0)}(\eins)+\dots+\phi^{(d)}(\eins)=\eins.
\]
\item We say that $\alpha$ has the Rokhlin property relative to $H$, if for every $n\geq 1$, there exist projections $p,q\in (A_\omega\cap A')^{H}$ such that
\[
\eins = \sum_{i=0}^{n-1} \alpha_{\omega,g_0}^i(p)+\sum_{j=0}^n \alpha_{\omega,g_0}^j(q).
\]
\end{enumerate}
\end{defi}

\begin{prop} \label{prop:Rp-vs-dimrokc}
Suppose that $\alpha: \Gamma\curvearrowright A$ is an action of a countable discrete group on a separable unital \cstar-algebra.
Let $H\subset\Gamma$ be a normal subgroup such that $\Gamma/H\cong\IZ$.
If $\alpha$ has the Rokhlin property relative to $H$, then $\dimrokc(\alpha,H)\leq 1$.
\end{prop}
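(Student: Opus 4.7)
Write $\alpha = \alpha_{\omega,g_0}$ throughout. Given $n \geq 1$, the Rokhlin property supplies projections $p, q \in (A_\omega \cap A')^H$ with $\sum_{i=0}^{n-1}\alpha^i(p) + \sum_{j=0}^{n}\alpha^j(q) = \eins$, giving pairwise orthogonal families $p_i := \alpha^i(p)$ for $0 \leq i \leq n-1$ and $q_j := \alpha^j(q)$ for $0 \leq j \leq n$ that sum to the unit. The key translation is that an equivariant c.p.c.\ order zero map $\phi:(C(\IZ/n\IZ),\textup{shift})\to((A_\omega\cap A')^H,\alpha)$ is uniquely specified by the positive contraction $a := \phi(e_0)$, which must be $\alpha^n$-fixed with pairwise orthogonal orbit $\{\alpha^i(a)\}_{i=0}^{n-1}$. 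Thus $\dimrokc(\alpha,H)\leq 1$ reduces to producing two such positive contractions $a^{(0)},a^{(1)}$ whose orbits commute across colors and satisfy $\sum_{i=0}^{n-1}(\alpha^i(a^{(0)})+\alpha^i(a^{(1)})) = \eins$.

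My strategy is a tent-function interpolation that uses the extra length of the $q$-tower as cyclic wrap-around ``glue'' for the shorter $p$-tower. The partition of unity defined by the weights $j\mapsto (n-j)/n$ and $j\mapsto j/n$ on positions $\{0,1,\dots,n\}$ splits the weighted $q$-tower into two length-$n$ pieces, which combine with $p$ to yield candidate elements
\[
a^{(0)} = p + \textstyle\sum_{j=1}^{n-1}\tfrac{n-j}{n}\,q_j, \qquad a^{(1)} = q_n + \textstyle\sum_{j=1}^{n-1}\tfrac{j}{n}\,q_j
\]
(or a closely related variant after boundary adjustment) whose orbit sums recover $\eins$. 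The pairwise orthogonality of the Rokhlin tower projections makes the cross-color commutation automatic, and the within-color orthogonality of orbits of length $n$ is immediate from the tower structure.

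The main obstacle is enforcing exact $\alpha^n$-invariance of $a^{(0)}$ and $a^{(1)}$, since the Rokhlin projections themselves are not $\alpha^n$-fixed. The critical ingredient here is the identity $p_n + q_{n+1} = p+q$ obtained by applying $\alpha$ to the tower decomposition of $\eins$ and comparing; as a sum of two projections equal to a projection, this forces $p_n \perp q_{n+1}$ and identifies the two wrap-around elements as orthogonal subprojections of $p+q$. Consequently, the combinations built above are $\alpha^n$-invariant up to an error entirely supported in $p+q$. To upgrade this approximate invariance (together with the other required identities) to exact identities in the sequence algebra $(A_\omega \cap A')^H$, I apply a standard reindexation argument: since the construction can be performed with arbitrarily small tolerance by choosing suitable representing sequences at the level of $A$, Kirchberg's $\eps$-test promotes the tolerance-$\eps$ solutions to an exact solution, yielding the two required equivariant c.p.c.\ order zero maps $\phi^{(0)},\phi^{(1)}$ with commuting ranges summing to $\eins$.
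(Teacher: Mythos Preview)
Your argument has a genuine gap at the point where you invoke the $\eps$-test. The construction you describe, using Rokhlin towers of length exactly $n$ and $n+1$, produces candidates $a^{(0)},a^{(1)}$ whose failure of $\alpha^n$-invariance has no small parameter attached to it. In particular, the claim that the defect $\alpha^n(a^{(0)})-a^{(0)}$ is ``entirely supported in $p+q$'' is incorrect: for $j\geq 2$ one has $\alpha^n(q_j)=\alpha^{j-1}\big(\alpha^{n+1}(q)\big)\leq \alpha^{j-1}(p+q)=p_{j-1}+q_{j-1}$, which is orthogonal to $p+q=p_0+q_0$, so the defect is in fact spread across the entire tower. Consequently the sentence ``the construction can be performed with arbitrarily small tolerance by choosing suitable representing sequences'' is empty --- nothing in your setup drives the tolerance to zero, and the $\eps$-test cannot be applied.

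The argument behind \cite[Proposition~2.8]{HirshbergWinterZacharias15}, which the paper is invoking, does use a tent-function interpolation, but it runs along \emph{long} towers: one applies the Rokhlin property for some $N\gg n$ rather than for $n$ itself. Inside towers of length $N$ and $N+1$ a piecewise-linear interpolation with slope of order $1/N$ produces two mutually commuting, pairwise orthogonal families of positive contractions that are approximately $n$-periodic under $\alpha$ with error controlled by that slope. Letting $N\to\infty$ then legitimately feeds the $\eps$-test in $(A_\omega\cap A')^H$ and yields the required exact equivariant c.p.c.\ order zero maps with commuting ranges summing to $\eins$. Your instinct for the interpolation is right; it simply has to be carried out along a long tower so that the boundary effects become negligible.
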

\begin{proof}
This is completely analogous to \cite[Proposition 2.8]{HirshbergWinterZacharias15}.
\end{proof}

The key feature of Rokhlin dimension with commuting towers comes from the following theorem, which is a special case of  \cite[Theorem 4.4]{Szabo18rd}.

\begin{theorem} \label{thm:dimrok-absorption}
Let $\Gamma$ be a countable, discrete group and $H\subset\Gamma$ be a normal subgroup such that $\Gamma/H\cong\IZ$.
Let $\alpha:\Gamma\curvearrowright A$ be an action on a separable unital \cstar-algebra, and let $\gamma: \Gamma\curvearrowright\CD$ be a semi-strongly self-absorbing, unitarily regular action.
If $\dimrokc(\alpha,H)<\infty$ and $\alpha|_H\cc(\alpha\otimes\gamma)|_H$, then $\alpha\cc\alpha\otimes\gamma$.
\end{theorem}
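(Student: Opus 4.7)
The plan is to deduce this theorem directly from \cite[Theorem 4.4]{Szabo18rd}, which is formulated for actions $\alpha: \Gamma \curvearrowright A$ and more general normal subgroups $H \lhd \Gamma$ whose quotient is a (residually finite) amenable group; the present statement is the specialization to $\Gamma/H \cong \IZ$. To apply the cited result, one only needs to verify that \autoref{def:rel-Rokhlin-type-conditions}(i) above is the specialization of the general definition of $\dimrokc(\alpha,H)$ from \cite{Szabo18rd} to this case, which is immediate because the canonical tower model for $\IZ$ is precisely $(\CC(\IZ/n\IZ), \IZ\text{-shift})$, which is exactly the domain algebra appearing in our definition.

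For orientation, I recall how the argument in \cite{Szabo18rd} is structured. By the characterization of absorption for (semi-)strongly self-absorbing actions from \cite{Szabo18ssa2}, the hypothesis $\alpha|_H \cc (\alpha \otimes \gamma)|_H$ together with unitary regularity of $\gamma$ is equivalent to the existence of a unital equivariant $*$-homomorphism $\phi: (\CD, \gamma|_H) \to (A_\omega \cap A', \alpha_\omega|_H)$, and the desired conclusion $\alpha \cc \alpha \otimes \gamma$ is equivalent to the existence of such a unital equivariant $*$-homomorphism for the full group action $(\CD,\gamma)$ rather than just its restriction. The task therefore reduces to upgrading an $H$-equivariant unital embedding of $\CD$ into the central sequence algebra to a $\Gamma$-equivariant one, using the finite Rokhlin dimension assumption.

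The upgrade is achieved by using the Rokhlin towers $\phi^{(0)},\dots,\phi^{(d)}$ to interpolate between successive twists of $\phi$. The idea is to spread the translates $\alpha_{\omega,g_0}^k \circ \phi \circ \gamma_{g_0}^{-k}$ across the spectra of the towers $\CC(\IZ/n\IZ)$, multiplied by the corresponding tower functions, and to sum over $k$ and over the $d+1$ towers. The commuting-tower condition ensures that the contributions of different towers do not interfere, and the fact that the tower functions sum to $\eins$ keeps the result unital. Taking $n\to\infty$ via a standard reindexing/$\eps$-test argument then produces a unital equivariant $*$-homomorphism $(\CD,\gamma) \to (A_\omega\cap A', \alpha_\omega)$, from which the conclusion follows.

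The main obstacle in this scheme is the last paragraph: because c.p.c.\ order zero maps are not $*$-homomorphisms, one has to exploit the order zero functional calculus and the commuting ranges condition in a delicate way to control the multiplicative defect of the candidate map and show that it vanishes in the ultrapower limit. For the purposes of the present paper, however, this technical work is already packaged inside \cite[Theorem 4.4]{Szabo18rd}, so the proof here reduces to the specialization to $\IZ$ discussed above.
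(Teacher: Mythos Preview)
Your proposal is correct and matches the paper's treatment: the paper does not give an independent proof of this theorem but simply states it as a special case of \cite[Theorem 4.4]{Szabo18rd}. Your additional orientation paragraphs accurately sketch the strategy of the cited result, but for the purposes of this paper the one-line reduction you give in your first paragraph is all that is needed.
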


\begin{defi}
For given numbers $p,q\in\IN$, recall the dimension drop algebra
\[
Z_{p,q} = \set{ f\in\CC\big( [0,1], M_p\otimes M_q\big) \mid f(0)\in M_p\otimes\eins, f(1)\in\eins\otimes M_q }.
\]
If $p$ and $q$ are relatively prime, this is called a prime dimension drop algebra.
\end{defi}

\begin{defi}[cf.\ {\cite{RordamWinter10}}] \label{def:ZU}
Let $k\geq 2$ be a natural number.
One defines $Z_{k,k+1}^U$ to be the universal (unital) \cstar-algebra generated by the range of a c.p.c.\ order zero map $\psi^U: M_k\to Z_{k,k+1}^U$ and a contraction $s_U\in Z_{k,k+1}^U$ subject to the relations
\[
s_U^*s_U = \eins-\psi(\eins),\quad \psi(e_{1,1})s_U=s_U.
\]
\end{defi}

Recall the following characterization of the isomorphism $Z_{k,k+1}^U\cong Z_{k,k+1}$:

\begin{theorem}[cf.\ {\cite[Proposition 5.1]{RordamWinter10} and \cite[Section 2]{Sato10}}] \label{thm:RW+S}
Let $k\geq 2$ be a natural number.
Suppose that $u: [0,1]\to\CU(M_k\otimes M_k)$ is any unitary path satisfying
\[
u_0=\eins,\quad u_1=\sum_{i,j=1}^k e_{i,j}^{(k)}\otimes e_{j,i}^{(k)}.
\]
Consider the element in $M_k\otimes M_{k+1}$ given by
\[
v=\sum_{j=1}^k e_{1,j}^{(k)}\otimes e_{j,k+1}^{(k+1)}.
\]
Consider the functions
\[
w, s \in Z_{k,k+1}
\]
and the map
\[
\psi: M_k \to \CC\big( [0,1], M_k\otimes M_{k+1} \big)
\]
given by the formulas
\[
w(t) = u_t\oplus \cos(\pi t/2)\cdot \eins^{(k)}\otimes e^{(k+1)}_{k+1,k+1};
\]
\[
\psi(x) = w(t)( x\otimes\eins^{(k+1)} )w(t)^*;
\]
\[
s(t) = \sin(\pi t/2) w(t) v.
\]
Then $\psi$ is a well-defined c.p.c.\ order zero map $\psi: M_k\to Z_{k,k+1}$ so that $\psi$ and the element $s$ satisfy the relations in \autoref{def:ZU}.
Moreover, the resulting $*$-homomorphism $\Phi: Z_{k,k+1}^U\to Z_{k,k+1}$ is an isomorphism.
\end{theorem}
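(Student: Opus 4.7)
The plan is to separate the claim into three parts: (a) $\psi$ is a well-defined c.p.c.\ order zero map into $Z_{k,k+1}$, (b) the pair $(\psi, s)$ satisfies the defining relations of \autoref{def:ZU} and thus yields the homomorphism $\Phi$ by universality, and (c) $\Phi$ is an isomorphism. The first two parts are concrete computations. The third is where I expect the real difficulty.

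For (a), the key observation is that $w(t) \in M_k \otimes M_{k+1}$ is a sum of two elements supported on orthogonal corners, namely $M_k \otimes (p M_{k+1} p)$ and $M_k \otimes e_{k+1,k+1}^{(k+1)} M_{k+1} e_{k+1,k+1}^{(k+1)}$, with $p = \sum_{i=1}^{k} e_{i,i}^{(k+1)}$. Each summand has norm at most one, so $w(t)$ is a contraction; consequently $\psi(x)(t) = w(t)(x \otimes \eins^{(k+1)}) w(t)^*$ is c.p.c.\ as the conjugation by a contraction of the $*$-homomorphism $x \mapsto x \otimes \eins^{(k+1)}$. Direct calculation yields
\[
w(t)^* w(t) = \eins^{(k)} \otimes p + \cos^2(\pi t/2)\, \eins^{(k)} \otimes e_{k+1,k+1}^{(k+1)},
\]
which lies in $\eins^{(k)} \otimes M_{k+1}$ and hence commutes with each $x \otimes \eins^{(k+1)}$. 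The order zero relation $\psi(x)\psi(y)=0$ whenever $xy=0$ is then immediate from this commutation. The boundary conditions follow by inspection: at $t=0$ one has $w(0)=\eins$, giving $\psi(x)(0) = x \otimes \eins^{(k+1)} \in M_k \otimes \eins$, while at $t=1$ the term $\cos(\pi/2)$ vanishes and $w(1) = u_1$ is the flip on $M_k \otimes M_k$ embedded in $M_k \otimes M_{k+1}$, under which $x \otimes \eins^{(k)}$ maps to $\eins^{(k)} \otimes x$ inside $\eins^{(k)} \otimes M_{k+1}$ (the $e_{k+1,k+1}$-component of $\eins^{(k+1)}$ being annihilated by $u_1$).

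For (b), the computation above immediately gives $\eins - \psi(\eins)(t) = \sin^2(\pi t/2)\, \eins^{(k)} \otimes e_{k+1,k+1}^{(k+1)}$. A short calculation using the form of $v$ shows $v^* v = \eins^{(k)} \otimes e_{k+1,k+1}^{(k+1)}$, and moreover $w(t)^* w(t)\, v = v$, since the left support projection of $v$ is dominated by $\eins^{(k)} \otimes p$, which is unaffected by the $\cos^2$-correction. Combining these gives $s^* s = \sin^2(\pi t/2)\, v^* v = \eins - \psi(\eins)$, verifying the first relation. The second relation $\psi(e_{1,1}) s = s$ reduces, again using $w^* w v = v$, to the identity $(e_{1,1} \otimes \eins^{(k+1)}) v = v$, which is immediate from the form $v = \sum_{j=1}^k e_{1,j}^{(k)} \otimes e_{j,k+1}^{(k+1)}$. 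The universal property of $Z_{k,k+1}^U$ then produces $\Phi$.

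The substantive step is (c). For surjectivity, I would argue that the image of $\Phi$ is already dense in $Z_{k,k+1}$: the scalar function $\sin^2(\pi t/2)$ is extracted from $s^* s = \eins - \psi(\eins)$ via functional calculus, so all of $\CC([0,1]) \cdot \eins^{(k)} \otimes \eins^{(k+1)}$ sits in the image; combining this with $\psi(M_k)$ and the partial isometry $s$ then recovers every matrix unit of $M_k \otimes M_{k+1}$ in each interior fibre while respecting the dimension-drop boundary conditions. For injectivity --- the main obstacle --- the cleanest route is to observe that every irreducible representation of $Z_{k,k+1}^U$ must factor through a point evaluation at some $t \in [0,1]$: the positive contraction $\eins - \psi^U(\eins) = s_U^* s_U$ is central in the sense forced by the relations, and a careful spectral analysis together with the presentation identifies its spectrum with $[0,1]$, matching the irreducible representations of $Z_{k,k+1}^U$ with those of $Z_{k,k+1}$ parameterized by $t \in [0,1]$. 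Alternatively, one may directly invoke the identification from \cite[Proposition 5.1]{RordamWinter10} and \cite[Section 2]{Sato10}, where the rigidity of the presentation is established. Either route confirms that $\Phi$ has no nontrivial kernel and is therefore an isomorphism.
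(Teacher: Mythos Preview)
The paper does not give its own proof of this theorem; it is stated with the citation to \cite[Proposition 5.1]{RordamWinter10} and \cite[Section 2]{Sato10} and then the text moves on. Your explicit verifications in (a) and (b) are correct and already go beyond what the paper records, and your fallback in (c) of invoking the cited references is precisely what the paper does.

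That said, your first route to injectivity in (c) contains a genuine error. The element $\eins - \psi^U(\eins) = s_U^* s_U$ is \emph{not} central in $Z_{k,k+1}^U$. One sees this already in the concrete model: from your own computations one has $\psi(\eins)(t)=\eins^{(k)}\otimes p + \cos^2(\pi t/2)\,\eins^{(k)}\otimes e_{k+1,k+1}^{(k+1)}$, and since the left support of $w(t)v$ lies under $\eins^{(k)}\otimes p$ while the right support of $v$ is $\eins^{(k)}\otimes e_{k+1,k+1}^{(k+1)}$, one gets
\[
\psi(\eins)(t)\, s(t) \;=\; s(t),\qquad s(t)\,\psi(\eins)(t) \;=\; \cos^2(\pi t/2)\cdot s(t).
\]
So $\psi(\eins)$ and $s$ do not commute, and the attempt to parameterize the irreducible representations of $Z_{k,k+1}^U$ by the spectrum of a central element fails as written. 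The argument in \cite{RordamWinter10} does not proceed via a central element; it instead exploits the structure theorem for order zero maps out of $M_k$ (as $*$-homomorphisms out of the cone $C_0((0,1],M_k)$) together with a direct analysis of the extra generator $s_U$ to match the representation theory of both sides. If you want a self-contained (c), that is the line to follow; otherwise your citation suffices and matches the paper.
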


\begin{rem} \label{rem:abstract-vs-concrete-model}
In the above construction, the specific isomorphism $Z_{k,k+1}^U\cong Z_{k,k+1}$ depends on the choice of the unitary path $u$.
Denote by $D_k\subset M_k\otimes M_k$ the \cstar-subalgebra generated by all elementary tensors of the form $z\otimes z$ for $z\in M_k$.
In particular, we may find a unitary path $u$ taking values in the relative commutant $(M_k\otimes M_k)\cap D_k'$.

Suppose that $G$ is some group and $\nu: G\to M_{k-1}$ is a unitary representation.
Then we consider $\eins\oplus\nu: G\to M_k$ as a unitary representation so that each group element acts as a unit on $e_{1,1}$.
For notational convenience, set $\mu_g=(\eins\oplus\nu_g)\otimes (\eins\oplus\nu_g\oplus\eins) \in M_k\otimes M_{k+1}$, which defines yet another unitary representation.

We may consider the unique action $\delta^{U,\nu}: G\curvearrowright Z_{k,k+1}^U$ given by $\delta^{U,\nu}(s_U)=s_U$ and $\delta^{U,\nu}\circ\psi^U = \psi^U\circ\ad(\eins\oplus\nu_g)$.
On the other hand, we may consider the action $\delta^\nu: G\curvearrowright Z_{k,k+1}$ given by $\delta^\nu_g(f)=\ad(\mu_g)(f)$.\footnote{Note that $\mu_g$ is not a unitary in $Z_{k,k+1}$, yet conjugation with this unitary still induces a well-defined automorphism on $Z_{k,k+1}$.}
Then $\delta^{U,\nu}$ and $\delta^\nu$ are conjugate.
\end{rem}
\begin{proof}
As stated above, we consider a continuous path $u: [0,1]\to M_k\otimes M_k$ that pointwise commutes with $D_k$ and satisfies $u_0=\eins$ and $u_1=\sum_{i,j=1}^k e_{i,j}\otimes e_{j,i}$.
Construct the elements $w, c_j, s\in Z_{k,k+1}$ and let $\Phi: Z_{k,k+1}^U \to Z_{k,k+1}$ be the isomorphism induced by them as stated in \autoref{thm:RW+S}.
We will show that $\Phi$ is automatically equivariant with respect to the $G$-actions $\delta^{U,\nu}$ and $\delta^\nu$.

Since any unitary of the form $u_t$ commutes with $\nu_g\otimes\nu_g$ for all $g\in G$, it follows from the definition of $w\in Z_{k,k+1}\subset \CC\big( [0,1], M_k\otimes M_{k+1} \big)$ that it must commute with $\mu_g$ for all $g\in G$.
In particular, $w$ is in the fixed point algebra of $\delta^\nu$.

We compute
\[
\begin{array}{ccl}
w(1)v &=& \dst \Big( \sum_{i,l=1}^k e^{(k)}_{i,l}\otimes e^{(k+1)}_{l,i} + \eins^{(k)}\otimes e^{(k+1)}_{k+1,k+1} \Big)\Big( \sum_{j=1}^k e_{1,j}^{(k)}\otimes e_{j,k+1}^{(k+1)} \Big) \\
&=& \dst \sum_{j=1}^k e^{(k)}_{j,j} \otimes e^{(k+1)}_{1,k+1} \ = \ \eins^{(k)}\otimes e^{(k+1)}_{1,k+1}.
\end{array}
\]
By definition of the unitaries $\mu_g$, we see that this element commutes with them.
Since we have observed earlier that $w(t)$ commutes with $\mu_g$ for all $t\in [0,1]$, it follows that also $v$ and $s$ commutes with $\mu_g$ for $g\in G$.
In other words, $s$ is also in the fixed point algebra of $\delta^\nu$.

Lastly, we compute for every $x\in M_k$ that
\[
\begin{array}{ccl}
\big[ \delta^\nu_g\circ\psi(x) \big](t) &=& \mu_gw(t)(x\otimes\eins^{(k+1)})w(t)^*\mu_g^* \\
&=& w(t)\mu_g(x\otimes\eins^{(k+1)})\mu_g^*w(t)^* \\
&=& w(t)(\ad(\eins\oplus\nu_g)(x)\otimes\eins^{(k+1)})w(t)^* \\
&=& \big[ \psi\circ\ad(\eins\oplus\nu_g)(x) \big](t).
\end{array}
\]
By the definition of the isomorphism $\Phi$ and the action $\delta^{U,\nu}$, this means $\delta^\nu_g\circ\Phi\circ\psi^U = \Phi\circ\delta^{U,\nu}_g\circ\psi^U$.
Since $s$ is in the fixed point algebra of $\delta^\nu$ and $s_U\in Z_{k,k+1}^U$ is in the fixed point algebra of $\delta^{U,\nu}$, it follows that $\Phi$ is equivariant. 
This finishes the proof.
\end{proof}

\begin{rem} \label{rem:left-regular-character}
Let $k\geq 2$ be a natural number and $G$ a discrete group. If $\lambda: G\to M_k$ is a unitary representation that arises from a permutation $\sigma: G\curvearrowright\set{1,\dots,k}$ via $\lambda_g(e_i)=e_{\sigma(i)}$, then there is a unitary representation $\nu: G\to M_{k-1}$ such that $\lambda$ is unitarily conjugate to a $\eins\oplus\nu$.
\end{rem}
\begin{proof}
Since $\lambda$ arises from a permutation as given in the statement, it follows that the vector
\[
x = k^{-1/2} (1,\dots,1)\in\IC^k
\]
is a unit vector fixed by $\lambda$.
Then $\lambda$ restricts to a unitary representation on the space $x^\perp\cong\IC^{k-1}$.
Thus the claim follows easily from here.
\end{proof}

\begin{lemma} \label{lem:existence}
Let $A$ be a separable, unital, simple, nuclear, monotracial $\CZ$-stable \cstar-algebra.
Let $\alpha: \Gamma\curvearrowright A$ be a strongly outer action of a countable discrete amenable group.
Let $k\geq 2$ and $\nu: \Gamma\to M_{k-1}$ a unitary represenation.
Then there exists a unital equivariant $*$-homomorphism from $(Z_{k,k+1}^U,\delta^{U,\nu})$ to $(A_\omega\cap A',\alpha_\omega)$.
\end{lemma}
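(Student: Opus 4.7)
My plan is to construct $\Phi: Z_{k,k+1}^U \to A_\omega \cap A'$ by supplying compatible data on the generators $\psi^U$ and $s_U$ from \autoref{def:ZU}, via a three-step argument: (a) produce a unital equivariant $*$-homomorphism $\pi: (M_k, \ad(\eins\oplus\nu)) \to (A^\omega \cap A', \alpha^\omega)$ at the tracial ultrapower level; (b) lift it to a c.p.c.\ order zero map on the $\cstar$-ultrapower via the trace-kernel $\sigma$-ideal; (c) deploy equivariant property (SI) to produce the partial-isometry-type element $s$.

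For step (a), strong outerness of $\alpha$, together with nuclearity and monotraciality of $A$, yields $M := \pi_\tau(A)'' \cong \CR$ (the hyperfinite II$_1$-factor), a canonical isomorphism $A^\omega \cap A' \cong M^\omega \cap M'$ intertwining the ultrapower actions, and an outer induced action $\bar\alpha: \Gamma \curvearrowright M$. Since $\ad(\eins\oplus\nu)$ is inner, the action $\bar\alpha \otimes \ad(\eins\oplus\nu)$ on $M \otimes M_k \cong M$ is outer, and by Ocneanu's uniqueness theorem for amenable outer actions on $\CR$ it is cocycle conjugate to $\bar\alpha$. Because cocycle perturbations supported in $M$ act trivially on the relative commutant $M^\omega \cap M'$, this yields a $\Gamma$-equivariant isomorphism $(M^\omega \cap M', \bar\alpha^\omega) \cong ((M \otimes M_k)^\omega \cap (M \otimes M_k)', (\bar\alpha \otimes \ad(\eins\oplus\nu))^\omega)$, whose target absorbs $(M_k, \ad(\eins\oplus\nu))$ as a unital equivariant subalgebra via the equivariant McDuff property for outer amenable actions on $\CR$. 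Transporting back yields $\pi$.

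For step (b), \autoref{prop:trace-kernel-sigma} combined with a standard argument using quasicentral, approximately invariant approximate units of $\CJ_A$ shows that $\CJ_A \cap A'$ is a $\Gamma$-$\sigma$-ideal in $A_\omega \cap A'$ with quotient $A^\omega \cap A'$. Applying \autoref{prop:sigma-ideal-lss}(ii) to the separable equivariant subalgebra $\pi(M_k)$ produces an equivariant c.p.c.\ order zero map $\psi: (M_k, \ad(\eins\oplus\nu)) \to (A_\omega \cap A', \alpha_\omega)$ lifting $\pi$. In particular $\eins - \psi(\eins) \in \CJ_A$ and $\tau^\omega(\psi(e_{1,1})^n) = \tau^\omega(\pi(e_{1,1})) = 1/k$ for every $n \geq 1$.

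For step (c), set $e := \eins - \psi(\eins)$ and $f := \psi(e_{1,1})$. Both lie in $(A_\omega \cap A')^{\alpha_\omega}$, since $e_{1,1}$ is fixed by $\ad(\eins\oplus\nu)$ (being supported on the trivial summand) and $\psi$ is equivariant. The hypotheses $\|e\|_{1,\omega} = 0$ and $\inf_n \|\eins - f^n\|_{1,\omega} \leq 1 - 1/k < 1$ hold, so \autoref{thm:MS-SI} (applicable since $A$ is $\CZ$-stable and hence has strict comparison) produces a contraction $s \in (A_\omega \cap A')^{\alpha_\omega}$ with $s^*s = e$ and $fs = s$. The pair $(\psi, s)$ satisfies exactly the defining relations of \autoref{def:ZU}, so the universal property of $Z_{k,k+1}^U$ delivers $\Phi$ with $\Phi \circ \psi^U = \psi$ and $\Phi(s_U) = s$, and equivariance is immediate on generators. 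The main obstacle is step (a): Ocneanu's theorem provides only cocycle conjugacy, so one must leverage the insensitivity of the central sequence algebra of $\CR$ to inner cocycle perturbations to pass from cocycle-conjugate absorption of $M_k$ to a genuine equivariant ultrapower embedding.
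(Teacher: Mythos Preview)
Your proof is correct and follows essentially the same three-step strategy as the paper: Ocneanu's theorem to obtain the equivariant unital embedding into the tracial central sequence algebra, the $\Gamma$-$\sigma$-ideal property of the trace-kernel to lift it to an equivariant c.p.c.\ order zero map, and equivariant property (SI) to produce the element $s$. The only cosmetic difference is in step~(a): the paper goes directly to cocycle conjugacy with the infinite tensor product action $\bar\alpha\otimes\ad(\eins\oplus\nu)^{\otimes\infty}$ on $\CR\,\bar\otimes\, M_k^{\bar\otimes\infty}$ (where the sequence of later tensor factors manifestly yields the central-sequence embedding), whereas you use a single tensor factor together with the equivariant McDuff property --- these are equivalent formulations.
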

\begin{proof}
We consider $(B,\beta)=(M_k,\ad(\eins\oplus\nu))$ as a monotracial \cstar-dynamical system.
Denote by $\tau$ the unique trace on $A$.
As $\alpha$ is strongly outer, the induced action on the weak closure $\pi_\tau(A)''$ is outer.
As $\pi_\tau(A)''\cong\CR$, it follows from Ocneanu's theorem \cite{Ocneanu85} that this action is cocycle conjugate to
\[
\alpha\otimes\ad(\eins\oplus\nu)^{\otimes\infty}: \Gamma\curvearrowright\CR\bar{\otimes} M_k^{\bar{\otimes}\infty}.\footnote{Here the symbol $\bar{\otimes}$ denotes the spatial tensor product of von Neumann algebras.}
\]
Thus can find a unital equivariant $*$-homomorphism
\[
\kappa: (B,\beta)\to (A^\omega\cap A',\alpha^\omega).
\]
By \autoref{prop:trace-kernel-sigma} and \autoref{prop:sigma-ideal-lss}, we find an equivariant c.p.c.\ order zero lift 
\[
\psi: (B,\beta)\to (A_\omega\cap A',\alpha_\omega)
\]
such that $\|\eins-\psi(\eins)\|_{1,\omega}=0$. 
We notice that $\psi(e_{1,1})\in (A_\omega\cap A')^{\alpha_\omega}$ and $\tau_\omega(\psi(e_{1,1})^m)=\frac1k$ for all $m\geq 1$.

Since $A$ has equivariant property (SI) relative to $\alpha$ by \autoref{thm:MS-SI}, we can find a contraction $s\in (A_\omega\cap A')^{\alpha_\omega}$ such that $s^*s=\eins-\psi(\eins)$ and $\psi(e_{1,1})s=s$.
We see that the pair $(\psi,s)$ satisfies the universal property used to define $Z_{k,k+1}^U$, and thus we obtain a unique unital $*$-homomorphism $\phi: Z_{k,k+1}^U \to A_\omega\cap A'$ with $\phi\circ\psi^U=\psi$ and $\phi(s_U)=s$.
As $s$ is fixed by $\alpha_\omega$ and $\psi$ was equivariant by choice, we see that $\phi$ becomes equivariant with respect to $\delta^{U,\nu}$ and $\alpha_\omega$.
\end{proof}

\begin{nota} \label{nota:model-action}
Henceforth, we will denote by $\lambda^{(N)}\in M_N$ the unitary that is induced by the left-regular representation of $\IZ_N$ on $\IC^N \cong L^2(\IZ_N)$.
More specifically, one has
\[
\lambda^{(N)} = e_{1,N}+\sum_{k=1}^{N-1} e_{k+1,k} = \matrix{ 0 & \dots & \dots & 1 \\ 1 & 0 & \dots & 0 \\  & 1 && \vdots \\  \vdots && \ddots & \\ 0 & \dots && 1}.
\]
We will also denote by $\delta^N$ the automorphism on $Z_{N,N+1}$ induced by $\ad\big( \lambda^{(N)}\otimes (\lambda^{(N)}\oplus\eins)\big)$.
\end{nota}

\begin{lemma}[see {\cite[Lemma 2.2]{Kishimoto96}}] \label{lem:Kishimoto-lemma}
Let $n\in\IN$ and $\eps> 0$ be given.
Then there exists $K\in\IN$ such that for every $N\geq K$ there exist projections $p_0,\dots,p_{n-1},q_0,\dots,q_n\in M_{N+1}$ such that
\[
\eins = \sum_{j=0}^{n-1} p_j + \sum_{l=0}^n q_l
\]
and
\[
p_{j+1} =_\eps \sigma^N(p_j) \ (\mathrm{mod}\ n),\quad q_{l+1} =_\eps \sigma^N(q_l) \ (\mathrm{mod}\ n+1),
\]
where $\sigma^N=\ad(\lambda^{(N)}\oplus\eins)$.
\end{lemma}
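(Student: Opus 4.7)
The plan is to invoke the Frobenius/Chicken McNugget theorem for the coprime pair $\{n,n+1\}$ to write $N+1=na+(n+1)b$ with $a,b\geq 0$ for all large $N$, construct explicit diagonal Rokhlin projections in $M_{N+1}$ from a concatenated block partition of $\{1,\dots,N+1\}$, and then upgrade the bounded number of boundary defects to an operator-norm estimate by a smearing argument inside each block.

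Concretely, every integer $\geq n(n-1)$ lies in the additive semigroup generated by $n$ and $n+1$, so for $N+1\geq n(n-1)$ I fix non-negative integers $a,b$ with $na+(n+1)b=N+1$, partition $\{1,\dots,N+1\}$ into $a$ consecutive blocks $B_1,\dots,B_a$ of length $n$ followed by $b$ consecutive blocks $C_1,\dots,C_b$ of length $n+1$, and define
\[
p_j^{(0)}=\sum_{i=1}^{a}e_{(i-1)n+j+1,(i-1)n+j+1},\qquad q_l^{(0)}=\sum_{k=1}^{b}e_{an+(k-1)(n+1)+l+1,an+(k-1)(n+1)+l+1}
\]
for $j=0,\dots,n-1$ and $l=0,\dots,n$. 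These $2n+1$ diagonal projections are pairwise orthogonal with sum $\eins_{M_{N+1}}$. Since $\sigma^N$ permutes diagonal matrix units by $e_{m,m}\mapsto e_{m+1,m+1}$ for $1\leq m\leq N-1$, by $e_{N,N}\mapsto e_{1,1}$, and fixes $e_{N+1,N+1}$, the identities $p_{j+1}^{(0)}=\sigma^N(p_j^{(0)})$ and $q_{l+1}^{(0)}=\sigma^N(q_l^{(0)})$ hold \emph{exactly} whenever the unit shift stays inside a single block. The only places a relation can fail are at the $B_a/C_1$ interface, at the cyclic wraparound $N\to 1$, and at the fixed coordinate $N+1$, each contributing a uniformly bounded number (independent of $N$) of single-coordinate defects.

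The main obstacle will be turning these bounded defects into an operator-norm estimate $=_\eps$, since two distinct diagonal projections differ in operator norm by exactly $1$ and so enlarging $N$ alone does not suffice with the raw diagonal construction. To resolve this, I propose to replace each defective rank-one matrix unit in the construction above by a rank-one projection onto a smoothly localized wave packet of support length $L:=\lceil \eps^{-2}\rceil$ contained strictly within a single partition block, rebalancing the remaining diagonal summands in that block by a Gram--Schmidt adjustment so that pairwise orthogonality and the sum-to-$\eins$ relation are preserved. The shift relations for such wave-packet projections then hold up to an operator-norm error of order $1/\sqrt{L}=O(\eps)$, since the shift of a packet of width $L$ overlaps its target on all but $O(1)$ coordinates out of $L$. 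Taking $K$ large enough that every smearing window lies strictly inside a single block and that the Frobenius condition is met then produces the desired projections $p_j,q_l\in M_{N+1}$ satisfying $=_\eps$.
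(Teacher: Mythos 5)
The paper offers no proof of \autoref{lem:Kishimoto-lemma}: it is imported verbatim from Kishimoto, where it is proved by Berg's gradual-exchange technique. So the only question is whether your direct argument works, and I do not think the repair step in your third paragraph does. Your first two paragraphs are correct: the Frobenius decomposition $N+1=na+(n+1)b$, the diagonal block construction, and the observation that the relations fail only at the $B_a/C_1$ seam, the cyclic wraparound, and the fixed coordinate $N+1$, each failure being of norm exactly $1$. But the proposed fix is first of all internally inconsistent --- your blocks have length $n$ or $n+1$, so for $\eps<(n+1)^{-1/2}$ no wave packet of support length $L=\lceil\eps^{-2}\rceil$ fits ``strictly within a single partition block'' --- and, more seriously, it attacks the wrong quantity. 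The defect at a seam is not that some projection moves too far under one application of $\sigma^N$; it is that mass is handed from one tower to the other. Concretely, $\sigma^N(p_{n-1}^{(0)})$ contains a rank-one summand sitting at the bottom coordinate of $C_1$, while $p_0^{(0)}$ (and any perturbation of it supported in the $B$-region) has no support anywhere in the $C$-region; smearing the offending matrix unit over $L$ coordinates of the $B$-side still yields, after applying $\sigma^N$, a vector essentially orthogonal to $p_0$. Likewise $e_{N+1,N+1}$ is fixed by $\sigma^N$, so whichever $q_l$ contains it contributes a persistent rank-one discrepancy to $\sigma^N(q_l)-q_{l+1}$. Your overlap estimate controls $\|\sigma^N(P_v)-P_v\|$ for a single packet $v$, which is not the quantity $\|\sigma^N(p_j)-p_{j+1}\|$ that the lemma requires; note also that exact towers are genuinely impossible here (the spectrum of $\lambda^{(N)}\oplus\eins$ is not invariant under rotation by $e^{2\pi i/n}$ in general), so the error must be removed by a mechanism that couples the two towers.

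The missing idea is the gradual exchange: near each seam one replaces the relevant basis vectors by $\cos\theta_i\,\xi+\sin\theta_i\,\xi'$, where $\xi$ and $\xi'$ lie on the orbit segments feeding the two \emph{different} towers (or $\xi'=e_{N+1}$ for the fixed point), with the angle incremented by $\pi/(2L)$ over a stretch of $L$ consecutive blocks, so that by the time the orbit reaches the seam the extra rank-one piece has been rotated entirely from one tower into the other. Each application of $\sigma^N$ then moves every $p_j$ and $q_l$ by $O(1/L)$ in norm, and choosing $a$ and $b$ both of order $N/n$ makes $L\to\infty$ as $N\to\infty$, which is where the threshold $K$ really comes from (it grows like $n/\eps$, not just like the Frobenius bound $n(n-1)$). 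This construction also keeps the family an exact resolution of the identity for free, whereas your ``Gram--Schmidt rebalancing'' is asserted but never shown to preserve $\sum_j p_j+\sum_l q_l=\eins$ together with the mutual orthogonality of the summands. As it stands, the proposal identifies the right obstacle but does not overcome it.
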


\begin{lemma} \label{lem:dimrokc-2-in-ZN}
Let $n\in\IN$ and $\eps> 0$ be given.
Then there exists $K\in\IN$ such that for every $N\geq K$ with $n|N$, there exist pairwise commuting positive elements
\[
a_0,\dots,a_{n-1},b_0,\dots,b_{n-1},c_0,\dots,c_n \in Z_{N,N+1}
\]
such that
\begin{itemize}
\item $\eins=\sum_{j=0}^{n-1} a_j + b_j + \sum_{k=0}^n c_k$;
\item each of the collections $\set{a_j}_{j=0}^{n-1}$, $\set{b_j}_{j=0}^{n-1}$ and $\set{c_l}_{l=0}^{n}$ consists of pairwise orthogonal elements;
\item $\delta^N(a_j)=_\eps a_{j+1}$ \textup{mod} $n$;
\item $\delta^N(b_j)=_\eps b_{j+1}$ \textup{mod} $n$;
\item $\delta^N(c_l)=_\eps c_{l+1}$ \textup{mod} $n+1$;
\item $b_j\perp c_l$ for all $j=0,\dots,n-1$ and $l=0,\dots,n$.
\end{itemize}
\end{lemma}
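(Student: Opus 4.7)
The plan is to build the three towers inside $Z_{N,N+1}$ by applying \autoref{lem:Kishimoto-lemma} on the $M_{N+1}$-factor at the right endpoint $t=1$, scaling the resulting projections by a continuous function that vanishes at $t=0$, and adding a compensating $a$-tower built from the $M_N$-factor at the left endpoint $t=0$ that vanishes at $t=1$ and restores the identity. The divisibility hypothesis $n\mid N$ enters precisely because it allows the $a$-tower to be constructed exactly, without any $\eps$-error.

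In detail, let $K$ be provided by \autoref{lem:Kishimoto-lemma} applied to $n$ and $\eps$. For any $N\geq K$ with $n\mid N$, take projections $p_0,\dots,p_{n-1},q_0,\dots,q_n\in M_{N+1}$ as furnished there. Using $n\mid N$, choose pairwise orthogonal projections $\tilde p_0,\dots,\tilde p_{n-1}\in M_N$ with $\sum_j\tilde p_j=\eins_{M_N}$ and $\ad(\lambda^{(N)})(\tilde p_j)=\tilde p_{j+1\,\mathrm{mod}\,n}$ for all $j$; explicitly one may take $\tilde p_j=\sum_{k\equiv j+1\!\!\pmod{n}}e_{k,k}$, which is cycled exactly by $\ad(\lambda^{(N)})$ by the explicit form of $\lambda^{(N)}$ given in \autoref{nota:model-action}. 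Fix continuous functions $\chi,\psi\colon[0,1]\to[0,1]$ with $\chi(0)=\psi(1)=0$ and $\chi+\psi\equiv 1$ (for instance $\chi(t)=t$, $\psi(t)=1-t$), and set
\begin{align*}
a_j(t) &= \psi(t)\cdot\bigl(\tilde p_j\otimes\eins_{M_{N+1}}\bigr),\\
b_j(t) &= \chi(t)\cdot\bigl(\eins_{M_N}\otimes p_j\bigr),\\
c_l(t) &= \chi(t)\cdot\bigl(\eins_{M_N}\otimes q_l\bigr).
\end{align*}
These elements belong to $Z_{N,N+1}$ because $a_j(0)\in M_N\otimes\eins$ while $a_j(1)=0$, and $b_j(0)=c_l(0)=0$ while $b_j(1),c_l(1)\in\eins\otimes M_{N+1}$.

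The remaining verifications are routine. The unit decomposition $\eins=\sum_j a_j+\sum_j b_j+\sum_l c_l$ follows from $(\chi+\psi)\equiv 1$ together with $\sum_j\tilde p_j=\eins_{M_N}$ and $\sum_j p_j+\sum_l q_l=\eins_{M_{N+1}}$. Within each of the three families, pairwise orthogonality is inherited from the corresponding projection identities in $M_N$ or $M_{N+1}$, as is the relation $b_j\perp c_l$. Pairwise commutativity across families holds because $a_j(t)$ lies in $M_N\otimes\eins_{M_{N+1}}$ while $b_j(t),c_l(t)$ lie in $\eins_{M_N}\otimes M_{N+1}$, and these two subalgebras commute elementwise. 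The shift relation $\delta^N(a_j)=a_{j+1\,\mathrm{mod}\,n}$ holds exactly by the choice of $\tilde p_j$, while the approximate shift relations for $b_j$ and $c_l$ follow from the corresponding estimates of \autoref{lem:Kishimoto-lemma} transported into the second tensor factor, with the cutoff $\|\chi\|_\infty\leq 1$ absorbed harmlessly into the error.

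There is no serious conceptual obstacle here. The construction is essentially dictated once one observes that the boundary condition $f(0)\in M_N\otimes\eins$ rules out placing all three towers on the right endpoint alone; the $a$-tower is introduced precisely to supply the identity at $t=0$ in a way compatible with the $M_N$-structure there, and the partition of unity $(\psi,\chi)$ in the path variable is what glues the two endpoint pictures together without destroying either commutativity or the tower property.
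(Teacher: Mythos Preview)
Your proof is correct and essentially identical to the paper's own argument: the paper also takes the same $K$ from \autoref{lem:Kishimoto-lemma}, defines $b_j(t)=t\cdot(\eins^{(N)}\otimes p_j)$, $c_l(t)=t\cdot(\eins^{(N)}\otimes q_l)$, and $a_j(t)=(1-t)\cdot\sum_{l=0}^{n_0-1} e_{1+j+ln,\,1+j+ln}\otimes\eins^{(N+1)}$, which is exactly your $\tilde p_j$ written out in different notation. Your presentation is slightly more expansive (the explicit partition of unity $\chi+\psi\equiv 1$ and the closing explanatory paragraph), but there is no substantive difference.
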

\begin{proof}
The number $K$ is the same as the one in \autoref{lem:Kishimoto-lemma}.
Let $N\geq K$ be such that $N=n_0n$ for some $n_0\in\IN$.

Applying \autoref{lem:Kishimoto-lemma}, we find projections $p_0,\dots,p_{n-1}$ and $q_0,\dots,q_n$ with the stated properties. For $j=0,\dots,n-1$ and $l=0,\dots,n$, we define functions via
\[
b_j(t) = t\cdot \eins^{(N)}\otimes p_j,\quad c_l(t)=t\cdot \eins^{(N)}\otimes q_l,\quad t\in [0,1].
\]
This yields pairwise orthogonal elements in $Z_{N,N+1}$, and by our choice of $p_j,q_l$ they are pairwise orthogonal and satisfy $\delta^N(b_j)=_\eps b_{j+1} \ (\mathrm{mod}\ n)$ and $\delta^N(c_l)=_\eps c_{l+1} \ (\mathrm{mod}\ n+1)$.
The sum over all $b_j$ and $c_l$ equals the element given by the function $[ t\mapsto t\cdot\eins ]$.

Lastly, for $j=0,\dots,n-1$ we set
\[
a_j(t) = (1-t)\cdot \sum_{l=0}^{n_0-1} e_{1+j+ln,1+j+ln}\otimes\eins^{(N+1)} \in M_N\otimes M_{N+1},\quad t\in [0,1].
\]
This defines pairwise orthogonal functions in $Z_{N,N+1}$ satisfying $\delta^N(a_j)=a_{j+1} \ (\mathrm{mod}\ n)$, and moreover their sum is equal to the function $[t\mapsto (1-t)\eins]$.

Evidently, all of these functions constructed so far commute with each other.
Moreover their sum is equal to the unit, which shows our claim.
\end{proof}

\begin{lemma} \label{lem:dd-dimrok2}
Let $n\in\IN$ and $\eps> 0$ be given.
Then there exists $K\in\IN$ such that for every $N\geq K$ with $n|N$, there exist pairwise commuting positive elements
\[
a_0,\dots,a_{n-1},b_0,\dots,b_{n-1},c_0,\dots,c_{n-1} \in Z_{N,N+1}
\]
such that
\begin{itemize}
\item $\eins=\sum_{j=0}^{n-1} a_j + b_j + c_j$;
\item each of the collections $\set{a_j}_{j=0}^{n-1}$, $\set{b_j}_{j=0}^{n-1}$ and $\set{c_j}_{j=0}^{n-1}$ consists of pairwise orthogonal elements;
\item $\delta^N(a_j)=_\eps a_{j+1}$ \textup{mod} $n$;
\item $\delta^N(b_j)=_\eps b_{j+1}$ \textup{mod} $n$;
\item $\delta^N(c_j)=_\eps c_{j+1}$ \textup{mod} $n$.
\end{itemize}
\end{lemma}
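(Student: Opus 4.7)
The plan is to refine \autoref{lem:dimrokc-2-in-ZN} by converting the $(n+1)$-th-level tower $\{c_l\}_{l=0}^n$ there into an $n$-tower, at the cost of dropping the additional orthogonality $b_j\perp c_l$. I would keep the $a_j$'s exactly as in \autoref{lem:dimrokc-2-in-ZN}, since they already form an exact $\delta^N$-cycle of length $n$ via the Rokhlin structure $\{E_k^{(N)}\}$ on $M_N$ (using $n\mid N$). The real work is replacing the pair $(\{b_j\},\{c_l\})$ by two new commuting families $(\{\tilde b_j\},\{\tilde c_j\})_{j=0}^{n-1}$ supported in the slice $t\cdot\eins^{(N)}\otimes M_{N+1}$, with $\sum_j(\tilde b_j+\tilde c_j)=t\cdot\eins$, each pairwise orthogonal within itself, mutually commuting, commuting with the $a_j$'s, and approximately $\delta^N$-cyclic of period $n$.

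The key mechanism is to exploit the tensor-product form $\delta^N = \ad(\lambda^{(N)})\otimes\ad(\lambda^{(N)}\oplus\eins)$: on the ambient algebra $M_N\otimes M_{N+1}$ the action has period exactly $N$, so since $n\mid N$ one has an exact $n$-Rokhlin tower $\{E_k^{(N)}\otimes\eins\}$ at the tensor-product level. Using this extra $n$-cyclic structure on the $M_N$-factor, I would redistribute the Kishimoto $(n+1)$-cyclic family $\{q_l\}\subset M_{N+1}$ across the $E_k^{(N)}$-slots by binning the pairs $(k,l)$ according to $j\equiv k-l\pmod n$, absorbing the extra level of the $(n+1)$-tower into the multiplicity of the $E_k^{(N)}$-index. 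This produces $\tilde c_j$ as a positive element in $Z_{N,N+1}$ that is approximately $\delta^N$-cyclic of period $n$ (since $\delta^N$ shifts both $k$ and $l$ by $+1$), and the $\tilde b_j$'s are defined correspondingly so that $\sum_j(\tilde a_j+\tilde b_j+\tilde c_j)=\eins$.

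The main obstacle lies at the boundary $t=1$, where the dimension-drop condition forces the $M_N$-factor to collapse to $\eins^{(N)}$, thereby destroying the tensor twist, while the $\sigma^N$-fixed corner $e_{N+1,N+1}\in M_{N+1}$ obstructs a straightforward $n$-cyclic decomposition by pairwise orthogonal projections. This is circumvented by observing that for commuting positive elements in a $C^*$-algebra, pairwise orthogonality $\tilde c_j\tilde c_k=0$ is strictly weaker than orthogonality of their diagonal entries in the standard basis, so a common eigenbasis rotated away from the $\sigma^N$-eigenbasis permits several $\tilde c_j(1)$'s to share support at the fixed corner via off-diagonal cancellations. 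One then interpolates via a continuous cut-off in $t$ between the tensor-twist construction valid for $t\le 1-\delta$ and this boundary-adapted construction near $t=1$. Verifying the exact sum-to-unit, pairwise orthogonality within each family, pairwise commutation across families, and $\eps$-cyclicity for $N$ sufficiently large (with $K$ chosen as in \autoref{lem:Kishimoto-lemma} plus a dependence on $\eps$ controlling the off-diagonal perturbations) is then a bookkeeping extension of the computations in the proof of \autoref{lem:dimrokc-2-in-ZN}.
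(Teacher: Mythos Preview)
Your plan is substantially more complicated than what is needed, and it contains a real gap. The paper's proof is a one-line reduction: apply the standard tent-function trick from \cite[Proposition 2.8]{HirshbergWinterZacharias15} directly to the output of \autoref{lem:dimrokc-2-in-ZN}. Concretely, one keeps the $a_j$'s, and uses scalar weights to split the $(n{+}1)$-tower $\{c_l\}_{l=0}^n$ into two approximate $n$-towers, say $c_j^{(0)}=\tfrac{n-j}{n}\,c_j$ and $c_j^{(1)}=\tfrac{j+1}{n}\,c_{j+1}$ for $j=0,\dots,n-1$. One then sets the new $b$-tower to be $b_j+c_j^{(0)}$ and the new $c$-tower to be $c_j^{(1)}$. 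The extra orthogonality $b_j\perp c_l$ in \autoref{lem:dimrokc-2-in-ZN} --- which you explicitly propose to discard --- is precisely what makes $b_j+c_j^{(0)}$ a pairwise orthogonal family and keeps the total number of towers at three rather than four. Everything happens inside $Z_{N,N+1}$ already, so there is no boundary problem at $t=1$ and no interpolation is required.

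By contrast, your binning argument does not do what you claim. On $M_N\otimes M_{N+1}$ the action $\delta^N$ shifts the index $k$ of $E_k^{(N)}$ by $1$ (mod $n$) and the index $l$ of $q_l$ by $1$ (mod $n{+}1$) simultaneously; binning by $j\equiv k-l\pmod n$ therefore produces approximately $\delta^N$-\emph{invariant} elements, not an $n$-cyclic family, and no other linear combination of the indices $k\in\IZ/n\IZ$ and $l\in\IZ/(n{+}1)\IZ$ gives a clean shift by $1$ mod $n$. Even setting this aside, your treatment of the endpoint $t=1$ is not a proof: the assertion that ``off-diagonal cancellations in a rotated eigenbasis'' yield pairwise orthogonal, mutually commuting positive elements in $\eins^{(N)}\otimes M_{N+1}$ that are approximately $\sigma^N$-cyclic of period $n$ and sum to $\eins$ is exactly the hard content one must supply, and you have not. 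The lemma you are trying to prove is the standard passage from the two-tower Rokhlin property to Rokhlin dimension with commuting towers; the tent-function argument is the canonical way to do it, and \autoref{lem:dimrokc-2-in-ZN} was engineered (via the $b\perp c$ clause) so that this argument goes through verbatim.
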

\begin{proof}
Using \autoref{lem:dimrokc-2-in-ZN}, we can obtain such elements in exactly the same fashion as in the proof of \cite[Proposition 2.8]{HirshbergWinterZacharias15}.
\end{proof}

\begin{theorem} \label{thm:dimrok2}
Let $A$ be a separable, unital, simple, nuclear, monotracial $\CZ$-stable \cstar-algebra.
Suppose that $\alpha: \Gamma\curvearrowright A$ is a strongly outer action of an amenable group.
Let $H\subset\Gamma$ be a normal subgroup with $\Gamma/H\cong\IZ$.
Then $\dimrokc(\alpha, H)\leq 2$.
\end{theorem}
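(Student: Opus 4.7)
The plan is to embed a copy of the prime dimension drop algebra $Z_{N,N+1}$ equivariantly inside $(A_\omega\cap A')^H$ in such a way that the generator $g_0\in\Gamma/H$ acts by a conjugate of the model automorphism $\delta^N$ of \autoref{nota:model-action}, and then to harvest the three families of pairwise commuting Rokhlin-type elements provided by \autoref{lem:dd-dimrok2}.

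First, fix $n\in\IN$ and $\eps>0$, and let $N$ be a multiple of $n$ satisfying the conclusion of \autoref{lem:dd-dimrok2}. The left-regular representation $\lambda^{(N)}\colon\IZ_N\to M_N$ comes from a cyclic permutation and hence, by \autoref{rem:left-regular-character}, is unitarily conjugate to $\eins\oplus\nu^{(N)}$ for some unitary representation $\nu^{(N)}\colon\IZ_N\to M_{N-1}$; let $u\in M_N$ be a unitary implementing this conjugation. Composing $\nu^{(N)}$ with the composed quotient map $\Gamma\twoheadrightarrow\IZ\twoheadrightarrow\IZ_N$ (sending $g_0\mapsto 1$) produces a unitary representation $\tilde\nu\colon\Gamma\to M_{N-1}$ with $H\subseteq\ker\tilde\nu$ and $\tilde\nu(g_0)=\nu^{(N)}(1)$. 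Applying \autoref{lem:existence} combined with \autoref{rem:abstract-vs-concrete-model} yields a unital equivariant $*$-homomorphism $\Psi\colon(Z_{N,N+1},\delta^{\tilde\nu})\to (A_\omega\cap A',\alpha_\omega)$. Because $\delta^{\tilde\nu}_h=\id$ for every $h\in H$, the image of $\Psi$ lies in $(A_\omega\cap A')^H$, and the equivariance of $\Psi$ intertwines $\delta^{\tilde\nu}_{g_0}$ with $\alpha_{\omega,g_0}$. Moreover, conjugation by the unitary $u\otimes(u\oplus\eins)\in M_N\otimes M_{N+1}$ preserves each of the two endpoint boundary conditions defining $Z_{N,N+1}$, so it descends to an automorphism $\Theta\in\Aut(Z_{N,N+1})$ which satisfies $\Theta\circ\delta^N=\delta^{\tilde\nu}_{g_0}\circ\Theta$ by direct computation of the conjugated unitary.

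Next, feed the pairwise commuting positive elements $a_0,\dots,a_{n-1},b_0,\dots,b_{n-1},c_0,\dots,c_{n-1}\in Z_{N,N+1}$ from \autoref{lem:dd-dimrok2} (for the chosen $n$ and $\eps$) through the composition $\Psi\circ\Theta$. The outcome is a triple of families of pairwise commuting positive contractions $\tilde a_j,\tilde b_j,\tilde c_j\in(A_\omega\cap A')^H$ such that each family is pairwise orthogonal, their total sum equals $\eins$, and each family is cyclically shifted modulo $n$ by $\alpha_{\omega,g_0}$ up to an error at most $\eps$. Since this construction works for every $\eps>0$, a standard reindexing argument---namely Kirchberg's $\eps$-test applied to the countably many algebraic relations and the countably many elements of $A$ and $H$ one needs to commute with or be fixed by---produces in $(A_\omega\cap A')^H$ versions of these families in which the $\alpha_{\omega,g_0}$-shift relations hold exactly. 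Declaring $\phi^{(0)}(e_j)=\tilde a_j$, $\phi^{(1)}(e_j)=\tilde b_j$ and $\phi^{(2)}(e_j)=\tilde c_j$ on the minimal projections $e_j\in\CC(\IZ/n\IZ)$ then yields three equivariant c.p.c.\ order zero maps with pairwise commuting ranges whose units add up to $\eins$, verifying $\dimrokc(\alpha,H)\leq 2$ by \autoref{def:rel-Rokhlin-type-conditions}.

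The principal obstacle has already been absorbed into the preceding machinery. The substantive ingredients are, on the one hand, the equivariant embedding of the prime dimension drop algebra coming from \autoref{lem:existence} (where property (SI) and the hyperfinite II$_1$-factor rigidity à la Ocneanu do the heavy lifting), and on the other hand, the purely combinatorial production of commuting Rokhlin towers inside $Z_{N,N+1}$ (\autoref{lem:dd-dimrok2}). What remains for the present proof is the matching between the abstract action $\delta^{\tilde\nu}_{g_0}$ handed to us by the embedding and the concrete model action $\delta^N$ that supports the Rokhlin towers; this matching, which reduces to the unitary equivalence $u\lambda^{(N)}u^*=\eins\oplus\nu^{(N)}(1)$, is exactly what \autoref{rem:left-regular-character} together with \autoref{rem:abstract-vs-concrete-model} is tailored to provide.
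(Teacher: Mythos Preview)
Your argument is correct and follows essentially the same route as the paper's proof: reduce via the $\eps$-test to a fixed pair $(n,\eps)$, choose $N$ large as in \autoref{lem:dd-dimrok2}, use \autoref{rem:left-regular-character} and \autoref{rem:abstract-vs-concrete-model} to match the model action $\delta^N$ with an action of the form $\delta^{\tilde\nu}$, invoke \autoref{lem:existence} to obtain the equivariant unital embedding into $(A_\omega\cap A')^H$, and then push the three commuting towers from \autoref{lem:dd-dimrok2} through. The only difference is cosmetic: you spell out the conjugating automorphism $\Theta=\ad(u\otimes(u\oplus\eins))$ of $Z_{N,N+1}$ explicitly, whereas the paper simply asserts the conjugacy by citing the two remarks.
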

\begin{proof}
Let $g_0\in\Gamma$ be an element generating the quotient.
Due to the $\eps$-test \cite[Lemma 3.1]{KirchbergRordam14}, it is enough to show for a fixed $\eps>0$ and $n\geq 1$ that there exist pairwise commuting positive contractions $a_j, b_j, c_j\in (A_\omega\cap A')^H$ such that:
\begin{itemize}
\item $\eins=\sum_{j=0}^{n-1} a_j + b_j + c_j$;
\item each of the collections $\set{a_j}_{j=0}^{n-1}$, $\set{b_j}_{j=0}^{n-1}$ and $\set{c_j}_{j=0}^{n-1}$ consists of pairwise orthogonal elements;
\item $\alpha_{\omega,g_0}(a_j)=_\eps a_{j+1}$ \textup{mod} $n$;
\item $\alpha_{\omega,g_0}(b_j)=_\eps b_{j+1}$ \textup{mod} $n$;
\item $\alpha_{\omega,g_0}(c_j)=_\eps c_{j+1}$ \textup{mod} $n$.
\end{itemize}
For the pair $(\eps,n)$, choose $N\geq 1$ big enough to satisfy the conclusion of \autoref{lem:dd-dimrok2}.
By our assumptions on $g_0,\Gamma,H$, we get a well-defined action $\gamma: \Gamma\curvearrowright Z_{N,N+1}$ via $\gamma|_H = \id$ and $\gamma_{g_0}=\delta^N$ in the sense of \autoref{nota:model-action}.
By \autoref{rem:left-regular-character} and \autoref{rem:abstract-vs-concrete-model}, $\gamma$ is conjugate to $\delta^{U,\nu}: \Gamma\curvearrowright Z^U_{N,N+1}$ for some unitary representation $\nu: \Gamma\to M_{N-1}$.
Thus \autoref{lem:existence} allows us to find a unital equivariant $*$-homomorphism
\[
\phi: (Z_{N,N+1},\gamma) \to (A_\omega\cap A',\alpha_\omega).
\]
By our definition of $\gamma$, this can also be viewed as a unital equivariant $*$-homomorphism
\[
\phi: (Z_{N,N+1},\delta^{N}) \to ( (A_\omega\cap A')^H,\alpha_{\omega,g_0}).
\]
Hence the desired elements exist by \autoref{lem:dd-dimrok2}.
\end{proof}

\begin{example}[cf.\ \cite{Kishimoto95}] \label{ex:UHF-Rp}
Every UHF algebra $\IU$ of infinite type admits a strongly self-absorbing automorphism with the Rokhlin property.
\end{example}
\begin{proof}
For a fixed $n\in\IN$, we consider the direct sum $M_n\oplus M_{n+1}$, and observe that the unitary
$s_n = \lambda^{(n)}\oplus\lambda^{(n+1)}$
defines an inner automorphism for which the (standard) minimal projections $(e_{1,1}\oplus 0) \in M_n\oplus 0$ and $(0\oplus e_{1,1})\in 0\oplus M_{n+1}$ generate a Rokhlin multitower of length $n$ on the nose.

Now let $\IU$ be a UHF algebra of infinite type.
Clearly there exists a unital $*$-homomorphism $\iota^{(n)}: M_n\oplus M_{n+1}\to\IU$.
We define
\[
\alpha = \bigotimes_{n\in\IN} \ad(\iota^{(n)}(s_n))^{\otimes\infty} : \IZ \curvearrowright (\IU^{\otimes\infty})^{\otimes\infty} \cong \IU
\]
which will satisfy the Rokhlin property by construction.
It is also strongly self-absorbing by \cite[Proposition 5.2]{Szabo18ssa2}.
\end{proof}

\begin{theorem} \label{thm:UHF-abs-Rp}
Let $A$ be a separable, unital, simple, nuclear, monotracial \cstar-algebra.
Suppose that $\alpha: \Gamma\curvearrowright A$ is a strongly outer action of an amenable group.
Let $H\subset\Gamma$ be a normal subgroup with $\Gamma/H\cong\IZ$.
Then for any UHF algebra $\IU$ of infinite type, the action $\alpha\otimes\id_\IU$ has the Rokhlin property relative to $H$.
\end{theorem}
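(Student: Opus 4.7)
The plan is to produce an explicit cocycle-conjugate model of $\alpha \otimes \id_\IU$ that manifestly has the Rokhlin property relative to $H$, and to invoke the absorption theorem \autoref{thm:dimrok-absorption} to bridge the two. Since the Rokhlin property relative to $H$ is phrased entirely through the equivariant central sequence algebra, it is invariant under cocycle conjugacy, so such a transfer will suffice. Let $g_0 \in \Gamma$ be a lift of a generator of $\Gamma/H \cong \IZ$ and write $\pi : \Gamma \to \IZ$ for the quotient map.

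Using $\IU \cong \IU \otimes \IU$, I first identify $A \otimes \IU$ with $A \otimes \IU \otimes \IU$. By \autoref{ex:UHF-Rp} there exists a strongly self-absorbing automorphism $\beta \in \Aut(\IU)$ with the Rokhlin property, which I inflate to a $\Gamma$-action $\tilde\beta : \Gamma \curvearrowright \IU$ by $\tilde\beta_g = \beta^{\pi(g)}$; in particular $\tilde\beta|_H = \id_\IU$ and $\tilde\beta_{g_0} = \beta$. Placing Rokhlin projections for $\beta$ into the third tensor factor of $A \otimes \IU \otimes \IU$ produces projections $p, q$ in the central sequence algebra that are trivially $H$-fixed and satisfy the Rokhlin decomposition under $(\alpha \otimes \id_\IU \otimes \tilde\beta)_{\omega, g_0}$; hence the action $\alpha \otimes \id_\IU \otimes \tilde\beta$ has the Rokhlin property relative to $H$.

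To transport this back to $\alpha \otimes \id_\IU$, I apply \autoref{thm:dimrok-absorption} with $\alpha \otimes \id_\IU$ in place of $\alpha$ and $\gamma = \tilde\beta$. The algebra $A \otimes \IU$ is $\CZ$-stable (since $\IU$ is), so \autoref{thm:dimrok2} yields $\dimrokc(\alpha \otimes \id_\IU, H) \leq 2 < \infty$. Since $\tilde\beta|_H = \id_\IU$, any identification $\IU \otimes \IU \cong \IU$ gives a conjugacy between $((\alpha \otimes \id_\IU) \otimes \tilde\beta)|_H$ and $(\alpha \otimes \id_\IU)|_H$, so in particular the hypothesis $\alpha|_H \cc (\alpha \otimes \gamma)|_H$ of the absorption theorem is satisfied. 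The conclusion then reads $\alpha \otimes \id_\IU \cc (\alpha \otimes \id_\IU) \otimes \tilde\beta$, and the Rokhlin property relative to $H$ transfers across this cocycle conjugacy.

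The main obstacle is the structural verification that $\tilde\beta$ qualifies as a semi-strongly self-absorbing and unitarily regular $\Gamma$-action in the sense required by \autoref{thm:dimrok-absorption}. This should follow directly from the corresponding facts for $\beta : \IZ \curvearrowright \IU$ together with the observation that $\tilde\beta|_H = \id$: any isomorphism $\phi : \IU \to \IU \otimes \IU$ intertwining $\beta$ with $\beta \otimes \beta$ automatically intertwines $\tilde\beta$ with $\tilde\beta \otimes \tilde\beta$ under inflation along $\pi$, while unitary regularity concerns only the behavior of the generator $g_0$ and so is inherited from $\beta$. No new conceptual input beyond the framework of (semi-)strongly self-absorbing actions is needed, but one must be careful to unfold the relevant definitions from the prior work.
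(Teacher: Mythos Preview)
Your proposal is correct and follows essentially the same strategy as the paper's proof: construct the inflated action $\tilde\beta$ (the paper calls it $\gamma$) from a strongly self-absorbing Rokhlin automorphism of $\IU$, verify it is semi-strongly self-absorbing and unitarily regular, and then invoke \autoref{thm:dimrok2} and \autoref{thm:dimrok-absorption} to obtain $\alpha\otimes\id_\IU\cc(\alpha\otimes\id_\IU)\otimes\tilde\beta$, which carries the relative Rokhlin property back. The only cosmetic differences are that the paper first replaces $\alpha$ by $\alpha\otimes\id_\IU$ and works with $\alpha$ throughout, and that the paper dispatches unitary regularity in one line via \cite[Proposition 2.19]{Szabo18ssa2} (i.e., $\CZ$-stability of $\tilde\beta$) rather than by reducing to the underlying $\IZ$-action; your observation that $(\IU_\omega\cap\IU')^{\tilde\beta_\omega}=(\IU_\omega\cap\IU')^{\beta_\omega}$ since $\tilde\beta|_H=\id$ is an equally valid route.
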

\begin{proof}
Certainly we may assume $\alpha\cc\alpha\otimes\id_\IU$ without loss of generality.
Let $g_0\in\Gamma$ be an element generating the quotient.
Let $\psi\in\Aut(\IU)$ be a strongly self-absorbing automorphism with the Rokhlin property, as in \autoref{ex:UHF-Rp}.
By the assumption on $g_0,\Gamma,H$, we obtain a well-defined action $\gamma:\Gamma\curvearrowright\IU$ via $\gamma|_H=\id$ and $\gamma_{g_0}=\psi$.
By replacing $\gamma$ if necessary\footnote{This is actually not necessary by \autoref{thm:dimrok-absorption}.}, we may also assume $\gamma\cc\gamma\otimes\id_\IU$.
Hence $\gamma$ is unitarily regular by \cite[Proposition 2.19]{Szabo18ssa2}.
Evidently $\gamma$ is strongly self-absorbing and has the Rokhlin property relative to $H$.

Clearly we have $(\alpha\otimes\gamma)|_H = (\alpha\otimes\id_\IU)|_H \cong \alpha|_H$.
Since we know that $\dimrokc(\alpha,H)\leq 2$ from \autoref{thm:dimrok2}, we may apply \autoref{thm:dimrok-absorption} to deduce $\alpha\cc\alpha\otimes\gamma$.
Hence $\alpha$ also has the Rokhlin property relative to $H$.
\end{proof}

\begin{rem}
With a further reduction argument, it is possible to improve the conclusion of \autoref{thm:UHF-abs-Rp} to include arbitrary infinite-dimensional UHF algebras in place of $\IU$.
Since we do not need this level of generality to obtain our main results, this shall not be pursued here.
\end{rem}

Lastly, let us also consider the purely infinite case to have a unified proof of the main result within the next section:

\begin{theorem} \label{thm:pi-Rp}
Let $A$ be a Kirchberg algebra.
Let $\alpha: \Gamma\curvearrowright A$ be a pointwise outer action of an amenable group.
Let $H\subset\Gamma$ be a normal subgroup with $\Gamma/H\cong\IZ$.
Then $\alpha$ has the Rokhlin property relative to $H$.
\end{theorem}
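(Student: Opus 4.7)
The plan is to parallel the proof of \autoref{thm:UHF-abs-Rp}, substituting $\CO_\infty$ for the UHF algebra $\IU$ and replacing the property-(SI) based existence result (\autoref{lem:existence}) by a Kirchberg-type central sequence embedding from \cite{Szabo18kp}.

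First, Nakamura's theorem \cite{Nakamura00}---every aperiodic $\IZ$-action on a unital Kirchberg algebra has the Rokhlin property---combined with \cite[Proposition 5.2]{Szabo18ssa2} yields a strongly self-absorbing Rokhlin automorphism $\psi \in \Aut(\CO_\infty)$, the purely infinite analog of \autoref{ex:UHF-Rp}: pick any aperiodic automorphism of $\CO_\infty$ and take an infinite tensor product. Letting $g_0 \in \Gamma$ generate $\Gamma/H \cong \IZ$, I would then define $\gamma: \Gamma \curvearrowright \CO_\infty$ by $\gamma|_H = \id$ and $\gamma_{g_0} = \psi$; this is well defined by normality of $H$, is strongly self-absorbing by construction, and has the Rokhlin property relative to $H$ tautologically.

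The crucial step is to produce a unital equivariant $*$-homomorphism $(\CO_\infty, \gamma) \to (A_\omega \cap A', \alpha_\omega)$; restricting to $H$-fixed points then yields a unital equivariant embedding $(\CO_\infty, \psi) \to ((A_\omega \cap A')^H, \alpha_{\omega,g_0})$, and transporting the Rokhlin projections of $\psi$ through this embedding immediately furnishes the desired Rokhlin property of $\alpha$ relative to $H$. This embedding is precisely the kind of central sequence absorption result established in \cite{Szabo18kp} for pointwise outer amenable actions on Kirchberg algebras: the pointwise outerness of $\alpha$ makes the relative central sequence algebra $(A_\omega \cap A')^H$ unital, simple, and purely infinite, and any $\Gamma$-action on $\CO_\infty$ that is trivial on $H$ then embeds equivariantly. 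To stay closer to the template of \autoref{thm:UHF-abs-Rp}, one can alternatively use the embedding to deduce $\dimrokc(\alpha, H) \leq 1$ via \autoref{prop:Rp-vs-dimrokc}, observe that $\gamma$ is unitarily regular by \cite[Proposition 2.19]{Szabo18ssa2} after absorbing another copy of itself, and apply \autoref{thm:dimrok-absorption} to the pair $(\alpha, \gamma)$ (using $\alpha \cc \alpha \otimes \id_{\CO_\infty}$, which is automatic since $A$ is Kirchberg) to obtain $\alpha \cc \alpha \otimes \gamma$, whence $\alpha$ inherits the Rokhlin property relative to $H$ from $\gamma$.

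The main obstacle is the equivariant embedding step itself, namely transferring Nakamura's theorem from the separable setting of Kirchberg algebras to the non-separable fixed point algebra $(A_\omega \cap A')^H$. The required inputs---pure infiniteness and simplicity of this algebra under pointwise outerness of $\alpha$, together with an equivariant $\CO_\infty$-absorption statement for $\Gamma$-actions that are trivial on $H$---are precisely those developed in \cite{Szabo18kp}, so the theorem reduces to combining that work with \cite{Nakamura00}, exactly as foreshadowed in the introduction.
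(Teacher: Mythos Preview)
Your proposal is correct and follows the same core strategy as the paper: construct a model action $\gamma:\Gamma\curvearrowright\CO_\infty$ with $\gamma|_H=\id$ and $\gamma_{g_0}$ aperiodic, invoke Nakamura's theorem to give $\gamma$ the Rokhlin property relative to $H$, and then use \cite{Szabo18kp} to pass this to $\alpha$.

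The paper's execution is considerably more direct, however. It simply cites \cite[Theorem 3.5]{Szabo18kp} to obtain $\alpha\cc\alpha\otimes\gamma$ in one step---no need to arrange $\gamma$ to be strongly self-absorbing, no central-sequence embedding language, and no detour through \autoref{thm:dimrok-absorption}. Your ``alternative'' route via $\dimrokc(\alpha,H)\leq 1$ is in fact redundant: once your embedding transports the Rokhlin projections of $\psi$ into $(A_\omega\cap A')^H$, you already have the relative Rokhlin property of $\alpha$ outright, so invoking \autoref{prop:Rp-vs-dimrokc} and \autoref{thm:dimrok-absorption} afterward proves nothing new. Also, your stated ``main obstacle''---transferring Nakamura's theorem to the non-separable algebra $(A_\omega\cap A')^H$---is a non-issue in either approach: Nakamura is only applied to $\CO_\infty$ itself, and the projections are then pushed forward (via the embedding in your version, or via the cocycle conjugacy $\alpha\cc\alpha\otimes\gamma$ in the paper's version).
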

\begin{proof}
Let $g_0\in\Gamma$ be an element generating the quotient.
Let $u\in\CU(\CO_\infty)$ be a unitary with full spectrum $\IT$. 
Then by the properties of $g_0,\Gamma,H$, we may associate a unique unitary representation $w: \Gamma\to\CU(\CO_\infty)$ via $w|_H=\eins$ and $w_{g_0}=u$.
Moreover we get a well-defined action
\[
\gamma=\ad(w)^{\otimes\infty}: \Gamma\curvearrowright\CO_\infty^{\otimes\infty}\cong\CO_\infty.
\]
Evidently $\gamma|_H=\id$ and $\gamma_{g_0}$ is an aperiodic automorphism.
Since $\gamma_{g_0}$ has the Rokhlin property by \cite[Theorem 1]{Nakamura00}, it follows by definition that $\gamma$ has the Rokhlin property relative to $H$.
Moreover, it follows from \cite[Theorem 3.5]{Szabo18kp} that $\alpha\cc\alpha\otimes\gamma$.
This shows our claim. 
\end{proof}


\section{Actions on strongly self-absorbing \cstar-algebras}
\label{sec:3}

For what follows recall \autoref{bootstrap-definition} of the bootstrap class of groups $\FC$ from the introduction.

\begin{theorem} \label{thm:absorption-induction}
Let $\CD$ be a strongly self-absorbing \cstar-algebra and let $A$ be a separable, unital, simple, nuclear, $\CD$-stable \cstar-algebra with at most one trace.
Let $\alpha: \Gamma\curvearrowright A$ be a strongly outer action of an amenable group.
Suppose that $H\subset\Gamma$ is a normal subgroup such that $\Gamma/H\in\FC$.
Let $\gamma: \Gamma\curvearrowright\CD$ be a semi-strongly self-absorbing action.
If $\alpha|_H \cc (\alpha\otimes\gamma)|_H$, then $\alpha\cc\alpha\otimes\gamma$.
\end{theorem}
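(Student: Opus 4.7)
The plan is to proceed by structural induction on the bootstrap class $\FC$, applied to the quotient $\Gamma/H$. Let $\FC'$ be the class of countable groups $Q$ such that the conclusion of the theorem holds whenever $(\Gamma, H, \alpha, \gamma)$ is as in the statement with $\Gamma/H \cong Q$. I will verify that $\FC'$ contains the trivial group (immediate, since $Q = \set{1}$ forces $H = \Gamma$ and hypothesis and conclusion coincide) and that $\FC'$ is closed under countable directed unions and extensions by $\IZ$; by minimality of $\FC$, this forces $\FC \subseteq \FC'$.

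For the extension-by-$\IZ$ step, suppose $Q$ has a normal subgroup $Q_0 \in \FC'$ with $Q/Q_0 \cong \IZ$. Writing $\pi: \Gamma \to Q$ for the quotient, I set $N = \pi^{-1}(Q_0)$; then $H \subseteq N$, $N \triangleleft \Gamma$, $N/H \cong Q_0 \in \FC'$, and $\Gamma/N \cong \IZ$. Applying the inductive hypothesis to the $N$-action $\alpha|_N$ with its normal subgroup $H$ yields $\alpha|_N \cc (\alpha \otimes \gamma)|_N$. This is precisely the input needed to invoke \autoref{thm:dimrok-absorption} for the $\IZ$-quotient $\Gamma \to \Gamma/N$, provided one can supply $\dimrokc(\alpha, N) < \infty$. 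This is where the trace dichotomy enters: in the monotracial case, \autoref{thm:dimrok2} gives $\dimrokc(\alpha, N) \leq 2$; in the traceless case, $\CD$-stability makes $A$ at least $\CZ$-stable, and since $A$ is separable, simple, nuclear, unital and traceless, Rordam's dichotomy forces $A$ to be a Kirchberg algebra, whence \autoref{thm:pi-Rp} together with \autoref{prop:Rp-vs-dimrokc} yields $\dimrokc(\alpha, N) \leq 1$. In either case \autoref{thm:dimrok-absorption} delivers $\alpha \cc \alpha \otimes \gamma$.

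For closure under countable directed unions, write $Q = \bigcup_i Q_i$ with $Q_i \in \FC'$ directed, and set $N_i = \pi^{-1}(Q_i)$; these form an increasing family of normal subgroups of $\Gamma$ containing $H$ and exhausting $\Gamma$, with $N_i/H \cong Q_i \in \FC'$. The inductive hypothesis gives $\alpha|_{N_i} \cc (\alpha \otimes \gamma)|_{N_i}$ for every $i$. One then concludes $\alpha \cc \alpha \otimes \gamma$ by invoking the directed-union permanence of semi-strongly self-absorbing absorption established in \cite{Szabo17ssa3}, which is exactly the type of stability needed to promote a coherent family of cocycle conjugacies on an exhausting chain of subgroups to a cocycle conjugacy for the full group.

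The decisive step is the extension by $\IZ$: all of the technical machinery of Section~\ref{sec:2} (the approximately central embeddings of $Z_{k,k+1}^U$, equivariant property~(SI) in the monotracial case, and the Nakamura-type result in the purely infinite case) exists precisely to verify $\dimrokc(\alpha, N) < \infty$ and feed it into \autoref{thm:dimrok-absorption}. Once this is in hand, the structural induction over $\FC$ is essentially formal, with the directed-union step relying on already-established permanence results.
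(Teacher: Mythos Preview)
Your approach is essentially identical to the paper's: a structural induction over $\FC$ applied to the quotient $\Gamma/H$, with the $\IZ$-extension step handled via the relative Rokhlin-type results of Section~\ref{sec:2} feeding into \autoref{thm:dimrok-absorption}, and the directed-union step handled by \cite[Theorem 5.6(ii)]{Szabo17ssa3}. The only organizational difference is cosmetic: the paper first notes that its class $\FF$ is trivially closed under arbitrary extensions and then reduces to showing $\IZ\in\FF$, whereas you directly verify closure under extensions by $\IZ$; these are equivalent.

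There is, however, one genuine omission. \autoref{thm:dimrok-absorption} requires not only that $\gamma$ be semi-strongly self-absorbing but also that it be \emph{unitarily regular}, and you invoke that theorem without checking this hypothesis. The paper closes this gap explicitly: $\gamma$ is equivariantly $\CZ$-stable by \autoref{thm:MS-SI} (if $\CD$ is finite) or \cite[Theorem 3.4]{Szabo18kp} (if $\CD$ is infinite), and equivariant $\CZ$-stability implies unitary regularity by \cite[Proposition 2.19]{Szabo18ssa2}. You should insert this verification before applying \autoref{thm:dimrok-absorption}.
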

\begin{proof}
Let $\FF$ be the class of all countable amenable groups $\Lambda$ such that the conclusion of this theorem holds whenever one has $\Gamma/H\cong\Lambda$ instead of $\Gamma/H\in\FC$.
Evidently the trivial group is in $\FF$, and $\FF$ is closed under extensions.
Moreover it follows directly from \cite[Theorem 5.6(ii)]{Szabo17ssa3} that $\FF$ is closed under countable directed unions.
By the definition of the class $\FC$, it suffices to show $\IZ\in\FF$ in order to obtain $\FC\subseteq\FF$, which will prove the claim.

So let us assume $\Gamma/H\cong\IZ$.
If $A$ is finite, then it follows from \autoref{thm:dimrok2} that $\dimrokc(\alpha,H)\leq 2$.
If $A$ is infinite, then it follows from \autoref{thm:pi-Rp} and \autoref{prop:Rp-vs-dimrokc} that $\dimrokc(\alpha,H)\leq 1$.
So in all cases we have $\dimrokc(\alpha,H)\leq 2$.
Note that $\gamma$ is equivariantly $\CZ$-stable by either \autoref{thm:MS-SI} or \cite[Theorem 3.4]{Szabo18kp} (depending on whether $\CD$ is finite or infinite), so in particular it is unitarily regular; see \cite[Proposition 2.19]{Szabo18ssa2}.
Thus if $\alpha|_H \cc (\alpha\otimes\gamma)|_H$, then $\alpha\cc\alpha\otimes\gamma$ follows by \autoref{thm:dimrok-absorption} and the proof is complete.
\end{proof}

\begin{theorem} \label{thm:ssa-induction}
Let $\CD$ be a strongly self-absorbing \cstar-algebra.
Let $\gamma,\gamma^{(1)},\gamma^{(2)}: \Gamma\curvearrowright\CD$ be strongly outer actions of an amenable group.
Suppose that $H\subset\Gamma$ is a normal subgroup such that $\Gamma/H\in\FC$.
\begin{enumerate}[label=\textup{(\roman*)},leftmargin=*]
\item If $\gamma|_H$ is semi-strongly self-absorbing, then so is $\gamma$. \label{thm:ssa-induction:1}
\item If $\gamma^{(i)}|_H$ is semi-strongly self-absorbing for $i=1,2$ and $\gamma^{(1)}|_H\cc\gamma^{(2)}|_H$, then $\gamma^{(1)}\cc\gamma^{(2)}$. \label{thm:ssa-induction:2}
\end{enumerate}
In particular, the statement of \autoref{conjecture-a} is closed under extensions by groups in the class $\FC$.
\end{theorem}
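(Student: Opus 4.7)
The plan is to replicate the bootstrap strategy used in the proof of \autoref{thm:absorption-induction}. I would define $\FF$ as the class of countable amenable groups $\Lambda$ such that both conclusions (i) and (ii) of the theorem hold whenever $\Gamma/H\cong\Lambda$, and then verify: (a) the trivial group lies in $\FF$; (b) $\FF$ is closed under extensions; (c) $\FF$ is closed under countable directed unions, by appeal to \cite[Theorem 5.6]{Szabo17ssa3}; and (d) $\IZ\in\FF$. From the definition of $\FC$ this forces $\FC\subseteq\FF$, proving the theorem.

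For (a), the hypotheses coincide with the conclusions when $H=\Gamma$. For (b), given $\Lambda_1,\Lambda_2\in\FF$ fitting into a short exact sequence $1\to\Lambda_1\to\Lambda\to\Lambda_2\to 1$ and a normal subgroup $H\subset\Gamma$ with $\Gamma/H\cong\Lambda$, I let $K\subset\Gamma$ be the preimage of $\Lambda_1$ so that $K/H\cong\Lambda_1$ and $\Gamma/K\cong\Lambda_2$; the desired conclusions then follow by chaining the hypothesis on $\Lambda_1$ applied to $(K,H)$ with the hypothesis on $\Lambda_2$ applied to $(\Gamma,K)$, using that strong outerness and semi-strong self-absorption pass to restrictions along normal subgroup inclusions.

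For (d), assume $\Gamma/H\cong\IZ$. To prove (i), I would consider the infinite tensor power $\gamma^{\otimes\infty}:\Gamma\curvearrowright\CD^{\otimes\infty}\cong\CD$; invoking the structural theory from \cite{Szabo18ssa,Szabo18ssa2}, this action is itself semi-strongly self-absorbing, and the hypothesis that $\gamma|_H$ is semi-strongly self-absorbing gives $\gamma|_H\scc(\gamma|_H)^{\otimes\infty}=\gamma^{\otimes\infty}|_H$. Applying \autoref{thm:absorption-induction} with $\alpha=\gamma$ and model action $\gamma^{\otimes\infty}$ then yields $\gamma\cc\gamma\otimes\gamma^{\otimes\infty}\cc\gamma^{\otimes\infty}$, so $\gamma$ inherits semi-strong self-absorption. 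To prove (ii), the newly-established (i) promotes both $\gamma^{(1)}$ and $\gamma^{(2)}$ to be semi-strongly self-absorbing, and then one applies \autoref{thm:absorption-induction} with $\alpha=\gamma^{(1)}$ and model $\gamma^{(2)}$ (respectively, with the roles swapped). The $H$-level hypothesis $\gamma^{(1)}|_H\cc(\gamma^{(1)}\otimes\gamma^{(2)})|_H$ holds by combining $\gamma^{(1)}|_H\cc\gamma^{(2)}|_H$ with the self-absorption of $\gamma^{(1)}|_H$, yielding $\gamma^{(1)}\cc\gamma^{(1)}\otimes\gamma^{(2)}\cc\gamma^{(2)}$.

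The main obstacle I anticipate is confirming that $\gamma^{\otimes\infty}$ really is semi-strongly self-absorbing in the generality required, which reduces to an equivariant approximately inner half-flip argument that I would handle via the machinery from \cite{Szabo18ssa,Szabo18ssa2,Szabo17ssa3}. Finally, the ``in particular'' statement follows formally from (i) and (ii): if \autoref{conjecture-a} holds for $H$, then any strongly outer $\gamma:\Gamma\curvearrowright\CD$ restricts to a strongly outer $H$-action, which is then automatically semi-strongly self-absorbing by the $H$-level case, so (i) upgrades this to $\gamma$ itself; uniqueness at the $\Gamma$-level follows from uniqueness at the $H$-level combined with (ii).
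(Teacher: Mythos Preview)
Your overall skeleton matches the paper's: reduce to $\Gamma/H\cong\IZ$ via the same bootstrap class $\FF$, show (ii) follows from (i) together with \autoref{thm:absorption-induction}, and for (i) aim at $\gamma\cc\gamma^{\otimes\infty}$. The paper does exactly this in its ``Step 3''.

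The genuine gap is precisely where you flag your ``main obstacle'': the claim that $\gamma^{\otimes\infty}$ is semi-strongly self-absorbing. This is not a consequence of the general machinery in \cite{Szabo18ssa,Szabo18ssa2,Szabo17ssa3}; those references give you that $\gamma^{\otimes\infty}$ is strongly self-absorbing \emph{once} you know that $\gamma$ has approximately $\Gamma$-inner flip (cf.\ \cite[Proposition~3.3]{Szabo18ssa}), but that approximate $\Gamma$-innerness is exactly what has to be proved here and is the entire content of the paper's ``Step 2''. The argument is substantial: after first reducing to the case $\gamma\cc\gamma\otimes\id_\IU$ via \cite[Theorem~6.6]{Szabo17ssa3}, one takes a unitary $u\in(\CD\otimes\CD)_\omega^H$ implementing the flip (using only that $\gamma|_H$ is semi-strongly self-absorbing), observes that the obstruction to $\Gamma$-invariance is the $\beta_{\omega,g_0}$-cocycle $w_n=u\beta_{\omega,g_0}^n(u)^*$ living in the $H$-fixed central sequence algebra, and then trivializes this cocycle. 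The trivialization uses the relative Rokhlin property established in Section~2 (\autoref{thm:UHF-abs-Rp} or \autoref{thm:pi-Rp}) together with a Kishimoto--Herman--Ocneanu style averaging argument \cite[Proposition~4.3]{Kishimoto98II}, plus control on the homotopy type of the $w_n$ coming from unitary regularity. None of this is available from the references you cite; it is where the new Rokhlin-type input of this paper is actually consumed.

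In short: your plan to apply \autoref{thm:absorption-induction} with model action $\gamma^{\otimes\infty}$ is correct, but that theorem requires the model to be semi-strongly self-absorbing, and establishing this for $\gamma^{\otimes\infty}$ is not a citation---it is the heart of the proof and needs the approximately $\Gamma$-inner flip argument sketched above.
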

\begin{proof}
First we observe that if \ref{thm:ssa-induction:1} is true, then \ref{thm:ssa-induction:2} follows directly from it together with \autoref{thm:absorption-induction}.
So we need to show \ref{thm:ssa-induction:1}.

Similarly as in the proof of \autoref{thm:absorption-induction}, let us consider the class $\FF$ of all countable amenable groups $\Lambda$ such that \ref{thm:ssa-induction:1} holds whenever $\Gamma/H\cong\Lambda$ instead of $\Gamma/H\in\FC$.
Evidently the trivial group is in $\FF$ and $\FF$ is closed under extensions.
Moreover it follows directly from \cite[Theorem 5.6(i)]{Szabo17ssa3} that $\FF$ is closed under countable directed unions.
By the definition of the class $\FC$, it suffices to show $\IZ\in\FF$ in order to obtain $\FC\subseteq\FF$, which will prove the claim.

So let us assume $\Gamma/H\cong\IZ$.
Let $g_0\in\Gamma$ be an element generating the quotient.

\textbf{Step 1:} It follows from either \autoref{thm:MS-SI} or \cite[Theorem 3.4]{Szabo18kp} (depending on whether $\CD$ is finite or infinite) that $\gamma\cc\gamma\otimes\id_\CZ$.\footnote{Note that $\CD\cong\CD\otimes\CZ$ is known due to \cite{Winter11}.}
By virtue of \cite[Theorem 6.6]{Szabo17ssa3}, the claim reduces to the special case where $\gamma\cc\gamma\otimes\id_\IU$ for some UHF algebra $\IU$ of infinite type.
So let us make this assumption from now on.

\textbf{Step 2:}
We assume $\gamma|_H$ is semi-strongly self-absorbing.
We claim that $\gamma$ has approximately $\Gamma$-inner flip.\footnote{This part of the proof will be similar to \cite[Theorem 6.6]{Szabo18rd}.}

Set $B=\CD\otimes\CD$ and $\beta=\gamma\otimes\gamma: \Gamma\curvearrowright B$.
Denote by $\Sigma$ the flip automorphism on $B$, which is $\beta$-equivariant.
Since $\gamma|_H$ is semi-strongly self-absorbing by assumption, the flip is approximately $H$-inner.
By \cite[Proposition 3.6]{Szabo18ssa2}, we find unitaries $x,y\in B_\omega^H$ such that $\ad(xyx^*y^*)(b)=\Sigma(b)$ for all $b\in B$.
We set $u=xyx^*y^*$ and observe that $u$ is homotopic to the unit inside $\CU\big( B_\omega^H )$ by \cite[Proposition 2.19]{Szabo18ssa2}.

On the other hand, we also have
\[
\beta_{\omega,g_0}^n(u)b\beta_{\omega,g_0}^n(u)^* = \beta_{\omega,g_0}^n\big( u\beta_{g_0}^{-n}(b)u^* \big) = \beta_{g_0}^n\circ\Sigma\circ\beta_{g_0}^{-n}(b) = \Sigma(b)
\]
for all $b\in B$ and $n\in\IZ$.
Hence we have $w_n := u\beta_{\omega,g_0}^n(u)^*\in (B_\omega\cap B')^H$ for all $n$.
Clearly $\set{w_n}_{n\in\IZ}$ is the $\beta_{\omega,g_0}$-cocycle over $\IZ$ associated to the unitary $w_1=u\beta_{\omega,g_0}(u)^*$.

Since $u$ is homotopic to the unit in $\CU\big(B_\omega^H\big)$, this is possible with some $L$-Lipschitz unitary path for some $L> 0$.
Since $\beta|_H$ is semi-strongly self-absorbing, it follows from \cite[Lemma 3.12]{Szabo18ssa2} that all of the unitaries $w_n$ are homotopic to the unit inside $\CU\big( (B_\omega\cap B')^H\big)$ via a $2L$-Lipschitz unitary path.
Let $D$ be some separable, $\beta_{\omega,g_0}$-invariant \cstar-subalgebra of $(B_\omega\cap B')^H$ containing the cocycle $\set{w_n}_{n\in\IZ}$ along with all such unitary paths for each $n\in\IZ$.

Since we have assumed $\gamma\cc\gamma\otimes\id_\IU$, we also have $\beta\cc\beta\otimes\id_\IU$, and therefore by \autoref{thm:UHF-abs-Rp} (if $\CD$ is finite) or \autoref{thm:pi-Rp} (if $\CD$ is infinite) the action $\beta$ has the Rokhlin property relative to $H$.
So for any $n\in\IN$ we have projections $p,q\in (B_\omega\cap B')^H$ such that $\eins=\sum_{j=0}^{n-1} \beta_{\omega,g_0}^j(p)+\sum_{l=0}^n \beta_{\omega,g_0}^l(q)$.
By a standard reindexation trick, we may additionally assume $[p,D]=0=[q,D]$.

This allows us to employ the same argument as in the proof of \cite[Proposition 4.3]{Kishimoto98II} to deduce that there exists a unitary $v\in (B_\omega\cap B')^H$ with $u\beta_{\omega,g_0}(u)^*=v\beta_{\omega,g_0}(v)^*$; see also \cite{HermanOcneanu84}.
Set $z=v^*u$.
Then $z$ is evidently a unitary in $B_\omega^H$, but it also satisfies $z=\beta_{\omega,g_0}(z)$, hence in fact $z\in B_\omega^{\beta_\omega}$.
Moreover we have
\[
zbz=v^*ubu^*v = v^*\Sigma(b)v = \Sigma(b) \quad\text{for all } b\in B.
\]
This shows that the flip automorphism $\Sigma$ is indeed approximately $\Gamma$-inner.

\textbf{Step 3:}
From ``Step 2'' above it follows that $\gamma^{\otimes\infty}: \Gamma\curvearrowright\CD^{\otimes\infty}$ is a strongly self-absorbing action; see \cite[Proposition 3.3]{Szabo18ssa}.
By our assumption that $\gamma|_H$ is semi-strongly self-absorbing, we have $\gamma|_H \cc (\gamma\otimes\gamma^{\otimes\infty})|_H$.
By applying \autoref{thm:absorption-induction} we see that $\gamma\cc\gamma\otimes\gamma^{\otimes\infty}\cc\gamma^{\otimes\infty}$, which shows that $\gamma$ is indeed semi-strongly self-absorbing.
This completes the proof.
\end{proof}

\begin{cor} \label{cor:ssa-uniqueness-C}
Let $\Gamma\in\FC$.
Let $\CD$ be a strongly self-absorbing \cstar-algebra.
Then up to (very strong) cocycle conjugacy, there exists a unique strongly outer $\Gamma$-action on $\CD$.
\end{cor}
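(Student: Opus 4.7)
The plan is to run a bootstrap induction over the class $\FC$, with \autoref{thm:ssa-induction} doing essentially all of the non-formal work. I will let $\FF$ denote the class of countable amenable groups $\Gamma$ for which \autoref{conjecture-a} holds for the given $\CD$: there exists a strongly outer $\Gamma$-action on $\CD$, any two such are (very strongly) cocycle conjugate, and every such action is semi-strongly self-absorbing. Since $\FC$ is by definition the smallest class containing the trivial group and closed under isomorphism, countable directed unions, and extensions by $\IZ$, the goal reduces to checking those three closure properties for $\FF$, after which $\FC \subseteq \FF$ follows by minimality.

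The base case is immediate: the trivial group admits only the trivial action on $\CD$, which is vacuously strongly outer and is semi-strongly self-absorbing precisely because $\CD$ itself is strongly self-absorbing. Closure of $\FF$ under countable directed unions is provided by \cite[Theorem 5.6]{Szabo17ssa3}, which I will simply cite. Closure under extensions by $\IZ$ is exactly where \autoref{thm:ssa-induction} enters, applied with a normal subgroup $H \trianglelefteq \Gamma$ satisfying $\Gamma/H \cong \IZ \in \FC$ and with the inductive hypothesis $H \in \FF$: part (i) promotes semi-strong self-absorption from $H$ to $\Gamma$ for any strongly outer $\Gamma$-action, and part (ii) then promotes the uniqueness up to $\cc$; the upgrade from $\cc$ to $\vscc$ between two semi-strongly self-absorbing actions is a general feature of the ssa framework developed in the earlier papers of the series.

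One separate point is the existence of a strongly outer action on $\CD$ for nontrivial $\Gamma \in \FC$, which does not follow formally from the induction on uniqueness but is routine: for any countable discrete group $\Gamma$, the Bernoulli shift on $\CD^{\otimes\Gamma} \cong \CD$ (the isomorphism being a consequence of strong self-absorption) provides a strongly outer $\Gamma$-action, as the induced action on the hyperfinite II$_1$-factor (in the tracial case) or on the relevant Kirchberg algebra (in the infinite case) is the standard Bernoulli shift, which is well known to be outer. I expect no genuine obstacle in the corollary itself; the real mathematical content has already been packaged in \autoref{thm:ssa-induction}, which in turn depends on the relative Rokhlin-type theorems of Section \ref{sec:2} (namely \autoref{thm:dimrok2} and \autoref{thm:pi-Rp}) and the absorption machinery of \autoref{thm:dimrok-absorption}. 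Once those are available, the corollary is a direct consequence of the definition of $\FC$.
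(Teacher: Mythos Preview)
Your argument is correct, but you are running the bootstrap induction a second time when it has already been carried out inside \autoref{thm:ssa-induction}. The paper's own proof is shorter: it simply applies \autoref{thm:ssa-induction} once with $H=\{1\}$. Since $\Gamma/\{1\}=\Gamma\in\FC$, the hypotheses of parts \ref{thm:ssa-induction:1} and \ref{thm:ssa-induction:2} are satisfied trivially (the restriction of any action to the trivial group is the identity on $\CD$, which is semi-strongly self-absorbing, and any two such restrictions are trivially cocycle conjugate). This immediately gives that every strongly outer $\Gamma$-action on $\CD$ is semi-strongly self-absorbing and that any two are cocycle conjugate, without rebuilding the class $\FF$. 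The upgrade to very strong cocycle conjugacy is then obtained exactly as you indicate, via the strengthened McDuff-type theorem \cite[Theorem 3.2]{Szabo17ssa3}. In short, your proof duplicates the inductive structure already internal to \autoref{thm:ssa-induction}; the paper exploits that theorem at full strength in a single invocation. Your inclusion of the Bernoulli shift for existence is a reasonable addition, and the paper makes the same observation in the proof of \autoref{cor:ssa-absorption-C}.
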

\begin{proof}
Uniqueness up to cocycle conjugacy follows from \autoref{thm:ssa-induction}\ref{thm:ssa-induction:2} for $H=\set{1}$.
The uniqueness up to very strong cocycle conjugacy is due to the fact that these actions are all semi-strongly self-absorbing by \autoref{thm:ssa-induction}\ref{thm:ssa-induction:1}, and hence one may apply the strengthened McDuff-type result \cite[Theorem 3.2]{Szabo17ssa3}.
\end{proof}

\begin{cor} \label{cor:ssa-absorption-C}
Let $\Gamma\in\FC$.
Let $\CD$ be a strongly self-absorbing \cstar-algebra and $A$ a separable, unital, simple, nuclear, $\CD$-stable \cstar-algebra with at most one trace.
Let $\alpha: \Gamma\curvearrowright A$ be an action.
Then $\alpha$ is strongly outer if and only if $\alpha\cc\alpha\otimes\gamma$ for every action $\gamma: \Gamma\curvearrowright\CD$.
\end{cor}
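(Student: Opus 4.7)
\emph{Plan.} The statement is an if-and-only-if, so I would handle both directions. The crucial device in both is a fixed \emph{reference} strongly outer action $\gamma_0 : \Gamma \curvearrowright \CD$, whose existence I would take from \autoref{cor:ssa-uniqueness-C} proved just above. Applying \autoref{thm:ssa-induction}\ref{thm:ssa-induction:1} with $H = \set{1}$ (the identity action on $\CD$ being trivially semi-strongly self-absorbing), I would further note that $\gamma_0$ is itself semi-strongly self-absorbing.

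For the implication $(\Leftarrow)$, I would simply specialize the hypothesis to $\gamma = \gamma_0$, obtaining $\alpha \cc \alpha \otimes \gamma_0$. Since $\gamma_0$ is strongly outer on $\CD$, the action $\alpha \otimes \gamma_0$ is strongly outer on $A \otimes \CD$ by a standard tensor-product argument: in the monotracial case the weak closures $\pi_{\tau_A}(A)''$ and $\pi_{\tau_\CD}(\CD)''$ are factors, so that outerness of one factor survives tensoring with any automorphism of the other; in the traceless case $A \otimes \CD$ is purely infinite simple, and outerness of $\gamma_{0,g}$ on $\CD$ directly forces outerness of $\alpha_g \otimes \gamma_{0,g}$. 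Since strong outerness is preserved under cocycle conjugacy, $\alpha$ itself is strongly outer.

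For the implication $(\Rightarrow)$, assume $\alpha$ is strongly outer. I would apply \autoref{thm:absorption-induction} with $H = \set{1}$: the quotient $\Gamma/H \cong \Gamma$ lies in $\FC$, the action $\gamma_0$ is semi-strongly self-absorbing as noted, and the compatibility hypothesis $\alpha|_H \cc (\alpha \otimes \gamma_0)|_H$ holds trivially since $H$ is trivial and $A$ is $\CD$-stable. This yields $\alpha \cc \alpha \otimes \gamma_0$. Now for an arbitrary $\gamma : \Gamma \curvearrowright \CD$, note that $\alpha \otimes \gamma$ is again a strongly outer action on the $\CD$-stable algebra $A \otimes \CD \cong A$ (by the same tensor-product observation), so the same application of \autoref{thm:absorption-induction} with $\alpha \otimes \gamma$ in place of $\alpha$ yields $\alpha \otimes \gamma \cc (\alpha \otimes \gamma) \otimes \gamma_0 = \alpha \otimes (\gamma \otimes \gamma_0)$. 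Finally $\gamma \otimes \gamma_0$ is strongly outer on $\CD \otimes \CD \cong \CD$, hence by the uniqueness in \autoref{cor:ssa-uniqueness-C} it is cocycle conjugate to $\gamma_0$. Chaining these conjugacies,
\[
\alpha \otimes \gamma \cc \alpha \otimes (\gamma \otimes \gamma_0) \cc \alpha \otimes \gamma_0 \cc \alpha,
\]
as required.

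\emph{Expected main obstacle.} None of the steps require genuinely new work; the proof is essentially a clean assembly of \autoref{cor:ssa-uniqueness-C}, \autoref{thm:ssa-induction}, and \autoref{thm:absorption-induction}. The one mildly technical point will be the tensor-product stability of strong outerness, which I would handle by the standard factor argument in the monotracial case and by pure infiniteness in the traceless case. The conceptual crux is the bootstrapping trick of replacing an arbitrary $\gamma$ by the uniquely determined strongly outer reference $\gamma_0$ via tensoring with $\gamma_0$ itself, which is legitimate precisely because $\gamma_0$ absorbs every other $\Gamma$-action on $\CD$ up to cocycle conjugacy.
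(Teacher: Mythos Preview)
Your proposal is correct and follows essentially the same route as the paper: fix a strongly outer reference action $\gamma_0$, observe via \autoref{thm:ssa-induction}\ref{thm:ssa-induction:1} that it is semi-strongly self-absorbing, use \autoref{thm:absorption-induction} with $H=\set{1}$ to get $\alpha\cc\alpha\otimes\gamma_0$, and then invoke the uniqueness in \autoref{cor:ssa-uniqueness-C} to pass from $\gamma_0$ to an arbitrary $\gamma$. The only cosmetic difference is in the ``only if'' direction: the paper deduces $\gamma_0\cc\gamma_0\otimes\gamma$ once (since $\gamma_0\otimes\gamma$ is strongly outer) and then tensors the established conjugacy $\alpha\cc\alpha\otimes\gamma_0$ by $\gamma$, whereas you re-apply \autoref{thm:absorption-induction} to $\alpha\otimes\gamma$; both routes are valid and amount to the same computation.
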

\begin{proof}
Let $\gamma^0: \Gamma\curvearrowright\CD$ be any strongly outer action, such as the non-commutative Bernoulli shift on $\bigotimes_\Gamma\CD\cong\CD$.
By \autoref{thm:ssa-induction}\ref{thm:ssa-induction:1} applied to $H=\set{1}$, $\gamma_0$ is a semi-strongly self-absorbing action.
Clearly any $\gamma^0$-absorbing action is also strongly outer, so this shows the ``if'' part.

For the ``only if'' part, assume that $\alpha$ is strongly outer.
By \autoref{thm:absorption-induction} applied to $H=\set{1}$, it follows that $\alpha\cc\alpha\otimes\gamma^0$.
In fact one has $\alpha\vscc\alpha\otimes\gamma^0$ due to the strengthened McDuff-type result \cite[Theorem 3.2]{Szabo17ssa3}.
Moreover, it follows from \autoref{cor:ssa-uniqueness-C} that $\gamma^0\vscc\gamma^0\otimes\gamma$ for every action $\gamma: \Gamma\curvearrowright\CD$, so indeed one always has $\alpha\vscc\alpha\otimes\gamma$.
This finishes the proof.
\end{proof}


\bibliographystyle{gabor}
\bibliography{master}

\begin{thebibliography}{10}
\providecommand{\url}[1]{\texttt{#1}}
\providecommand{\urlprefix}{URL }

\bibitem{BarlakSzabo17}
S.~Barlak, G.~Szab{\'o}: Rokhlin actions of finite groups on {UHF}-absorbing
  \cstar-algebras.
\newblock Trans. Amer. Math. Soc. 369 (2017), pp. 833--859.

\bibitem{BrownOzawa}
N.~P. Brown, N.~Ozawa: \cstar-algebras and finite-dimensional approximations,
  \emph{Graduate Studies in Mathematics}, volume~88.
\newblock American Mathematical Society, Providence, RI (2008).

\bibitem{Connes76}
A.~Connes: Classification of injective factors. {Cases II${}_1$, II${}_\infty$,
  III${}_\lambda$, $\lambda\neq1$}.
\newblock Ann. Math. 74 (1976), pp. 73--115.

\bibitem{GardellaLupini18}
E.~Gardella, M.~Lupini: Actions of nonamenable groups on {$\CZ$}-stable
  \cstar-algebras  (2018).
\newblock \urlprefix\url{https://arxiv.org/abs/1803.06308}.

\bibitem{GardellaLupini18uhf}
E.~Gardella, M.~Lupini: Actions of rigid groups on {UHF}-algebras.
\newblock J. Funct. Anal. 275 (2018), no.~2, pp. 381--421.

\bibitem{GardellaPhillipsWang18}
E.~Gardella, N.~C. Phillips, Q.~Wang: Absorption of model actions on the
  {J}iang--{S}u algebra.
\newblock In preparation.

\bibitem{HermanOcneanu84}
R.~Herman, A.~Ocneanu: Stability for integer actions on {UHF} \cstar-algebras.
\newblock J. Funct. Anal. 59 (1984), pp. 132--144.

\bibitem{HirshbergWinterZacharias15}
I.~Hirshberg, W.~Winter, J.~Zacharias: Rokhlin dimension and \cstar-dynamics.
\newblock Comm. Math. Phys. 335 (2015), pp. 637--670.

\bibitem{Izumi04}
M.~Izumi: Finite group actions on \cstar-algebras with the {R}ohlin property
  {I}.
\newblock Duke Math. J. 122 (2004), no.~2, pp. 233--280.

\bibitem{Izumi04II}
M.~Izumi: Finite group actions on \cstar-algebras with the {R}ohlin property
  {II}.
\newblock Adv. Math. 184 (2004), no.~1, pp. 119--160.

\bibitem{Izumi12OWR}
M.~Izumi: Poly-{$\IZ$} group actions on {K}irchberg algebras.
\newblock Oberwolfach Rep. 9 (2012), pp. 3170--3173.

\bibitem{IzumiMatui18}
M.~Izumi, H.~Matui: Poly-{$\IZ$} group actions on {K}irchberg algebras {I}.
\newblock In preparation.

\bibitem{IzumiMatui10}
M.~Izumi, H.~Matui: {$\IZ^2$}-actions on {K}irchberg algebras.
\newblock Adv. Math. 224 (2010), pp. 355--400.

\bibitem{JiangSu99}
X.~Jiang, H.~Su: On a simple unital projectionless \cstar-algebra.
\newblock American J. Math. 121 (1999), no.~2, pp. 359--413.

\bibitem{Jones83}
V.~F.~R. Jones: A converse to {O}cneanu's theorem.
\newblock J. Operator Theory 10 (1983), no.~1, pp. 61--63.

\bibitem{Kasparov88}
G.~Kasparov: Equivariant {$KK$}-theory and the {N}ovikov conjecture.
\newblock Invent. Math. 91 (1988), pp. 147--201.

\bibitem{Kirchberg04}
E.~Kirchberg: Central sequences in \cstar-algebras and strongly purely infinite
  algebras.
\newblock Operator Algebras: The Abel Symposium 1 (2004), pp. 175--231.

\bibitem{KirchbergRordam14}
E.~Kirchberg, M.~R{\o}rdam: Central sequence \cstar-algebras and tensorial
  absoption of the {J}iang--{S}u algeba.
\newblock J. reine angew. Math. 695 (2014), pp. 175--214.

\bibitem{Kishimoto95}
A.~Kishimoto: The {R}ohlin property for automorphisms of {UHF} algebras.
\newblock J. reine angew. Math. 465 (1995), pp. 183--196.

\bibitem{Kishimoto96}
A.~Kishimoto: The {R}ohlin property for shifts on {UHF} algebras and
  automorphisms of {C}untz algebras.
\newblock J. Funct. Anal. 140 (1996), pp. 100--123.

\bibitem{Kishimoto98II}
A.~Kishimoto: Unbounded derivations in {AT} algebras.
\newblock J. Funct. Anal. 160 (1998), pp. 270--311.

\bibitem{LennoxRobinson}
J.~C. Lennox, D.~J.~S. Robinson: The theory of infinite soluble groups.
\newblock Oxford Mathematical Monographs. Clarendon Press (2004).

\bibitem{Liao16}
H.~C. Liao: A {R}okhlin type theorem for simple \cstar-algebras of finite
  nuclear dimension.
\newblock J. Funct. Anal. 270 (2016), pp. 3675--3708.

\bibitem{Liao17}
H.-C. Liao: Rokhlin dimension of {$\IZ^m$}-actions on simple \cstar-algebras.
\newblock Internat. J. Math. 28 (2017), no.~7.

\bibitem{Masuda07}
T.~Masuda: Evans--{K}ishimoto type argument for actions of discrete amenable
  groups on {McDuff} factors.
\newblock Math. Scand. 101 (2007), pp. 48--64.

\bibitem{Masuda13}
T.~Masuda: Unified approach to the classification of actions of discrete
  amenable groups on injective factors.
\newblock J. reine angew. Math. 683 (2013), pp. 1--47.

\bibitem{Matui08}
H.~Matui: Classification of outer actions of {$\IZ^N$} on {$\CO_2$}.
\newblock Adv. Math. 217 (2008), pp. 2872--2896.

\bibitem{Matui11}
H.~Matui: {$\IZ^N$}-actions on {UHF} algebras of infinite type.
\newblock J. reine angew. Math 657 (2011), pp. 225--244.

\bibitem{MatuiSato12}
H.~Matui, Y.~Sato: {$\CZ$}-stability of crossed products by strongly outer
  actions.
\newblock Comm. Math. Phys. 314 (2012), no.~1, pp. 193--228.

\bibitem{MatuiSato12acta}
H.~Matui, Y.~Sato: Strict comparison and {$\CZ$}-absorption of nuclear
  \cstar-algebras.
\newblock Acta Math. 209 (2012), pp. 179--196.

\bibitem{MatuiSato14}
H.~Matui, Y.~Sato: {$\CZ$}-stability of crossed products by strongly outer
  actions {II}.
\newblock Amer. J. Math. 136 (2014), pp. 1441--1497.

\bibitem{Nakamura00}
H.~Nakamura: Aperiodic automorphisms of nuclear purely infinite simple
  \cstar-algebras.
\newblock Ergod. Th. Dynam. Sys. 20 (2000), pp. 1749--1765.

\bibitem{Ocneanu85}
A.~Ocneanu: Actions of discrete amenable groups on von {N}eumann algebras,
  \emph{Lecture Notes in Mathematics}, volume 1138.
\newblock Springer-Verlag, Berlin (1985).

\bibitem{Rordam}
M.~R{{\o}}rdam: Classification of {N}uclear \cstar-{A}lgebras. {E}ncyclopaedia
  of {M}athematical {S}ciences.
\newblock Springer (2001).

\bibitem{RordamWinter10}
M.~R{{\o}}rdam, W.~Winter: The {J}iang--{S}u algeba revisited.
\newblock J. reine angew. Math. 642 (2010), pp. 129--155.

\bibitem{Sato10}
Y.~Sato: The {R}ohlin property for automorphisms of the {J}iang--{S}u algebra.
\newblock J. Funct. Anal. 259 (2010), no.~2, pp. 453--476.

\bibitem{Sato17}
Y.~Sato: Actions of amenable groups and crossed products of {$\CZ$}-absorbing
  \cstar-algebras  (2017).
\newblock \urlprefix\url{https://arxiv.org/abs/1612.08529v1}.

\bibitem{Szabo17ssa3}
G.~Szab{\'o}: Strongly self-absorbing \cstar-dynamical systems, {III}.
\newblock Adv. Math. 316 (2017), no.~20, pp. 356--380.

\bibitem{Szabo18kp}
G.~Szab{\'o}: Equivariant {K}irchberg--{P}hillips-type absorption for amenable
  group actions.
\newblock Comm. Math. Phys.  (2018).
\newblock \urlprefix\url{http://dx.doi.org/10.1007/s00220-018-3110-3}.

\bibitem{Szabo18rd}
G.~Szab{\'o}: Rokhlin dimension: absorption of model actions  (2018).
\newblock \urlprefix\url{https://arxiv.org/abs/1804.04411v1}.

\bibitem{Szabo18ssa2}
G.~Szab{\'o}: Strongly self-absorbing \cstar-dynamical systems, {II}.
\newblock J. Noncomm. Geom. 12 (2018), no.~1, pp. 369--406.

\bibitem{Szabo18ssa}
G.~Szab\'{o}: Strongly self-absorbing {$\mathrm{C}^*$}-dynamical systems.
\newblock Trans. Amer. Math. Soc. 370 (2018), pp. 99--130.

\bibitem{TikuisisWhiteWinter17}
A.~Tikuisis, S.~White, W.~Winter: Quasidiagonality of nuclear \cstar-algebras.
\newblock Ann. Math. 185 (2017), no.~1, pp. 229--284.

\bibitem{TomsWinter07}
A.~S. Toms, W.~Winter: Strongly self-absorbing \cstar-algebras.
\newblock Trans. Amer. Math. Soc. 359 (2007), no.~8, pp. 3999--4029.

\bibitem{Winter11}
W.~Winter: Strongly self-absorbing \cstar-algebras are {$\CZ$}-stable.
\newblock J. Noncomm. Geom. 5 (2011), no.~2, pp. 253--264.

\bibitem{Winter17}
W.~Winter: Structure of nuclear \cstar-algebras: From quasidiagonality to
  classification, and back again  (2017).
\newblock \urlprefix\url{https://arxiv.org/abs/1712.00247}.

\bibitem{WinterZacharias09}
W.~Winter, J.~Zacharias: Completely positive maps of order zero.
\newblock M{\"u}nster J. Math. 2 (2009), pp. 311--324.

\end{thebibliography}

\end{document}